\newtheorem{thm}{Theorem}[section]
\newtheorem{cor}[thm]{Corollary}
\newtheorem{lem}[thm]{Lemma}
\theoremstyle{definition}
\newtheorem{exam}[thm]{Example}
\DeclareMathOperator{\reg}{reg}
 \DeclareMathOperator{\iin}{in}
\DeclareMathOperator{\pd}{pd}
 \DeclareMathOperator{\rect}{rect}
  \DeclareMathOperator{\Tor}{Tor}
\begin{document}

\title[On the  Betti numbers of  edge ideal of  skew Ferrers graphs]{On the   Betti numbers of  edge ideal of  skew Ferrers graphs}

\author{Do Trong Hoang}

\email{dthoang@math.ac.vn}
\address{Institute of Mathematics, Vietnam Academy of Science and Technology, 18 Hoang Quoc Viet, 10307 Hanoi, Vietnam}

\subjclass[2010]{05C30, 05D15}
\keywords{Betti numbers, skew Ferrers graphs, binomial edge ideals, closed graphs}
\thanks{}
\date{}
\dedicatory{}
\commby{}
\begin{abstract}   We prove that  $\beta_p(I(G)) = \beta_{p,p+r}(I(G))$ for  skew Ferrers graph $G$, where $p:=\pd(I(G))$ and $r:=\reg(I(G))$. 
 As a consequence, we confirm that Ene, Herzog and Hibi's conjecture is true for the  Betti numbers in the last columm of Betti table.   We also give an explicit formula for the  unique extremal Betti number of binomial edge ideal   for some closed graphs.  
\end{abstract}
\maketitle
\section*{Introduction}

Let $R = k[x_1,\ldots, x_n]$ be a polynomial ring over an arbitrary field $k$. Associated to any homogeneous ideal $I$ of $R$ is a minimal free graded resolution 
  $$0\to \bigoplus_{j} R(-j)^{\beta_{\ell,j}(I)} \to  \bigoplus_{j} R(-j)^{\beta_{\ell-1,j}(I)} \to\cdots\to  \bigoplus_{j} R(-j)^{\beta_{0,j}(I)} \to I  \to 0, $$
where $R(-j)$ denotes the $R$-module obtained by shifting the degrees of $R$ by $j$, and $\ell =\pd(I)$ is the projective dimension of $I$. The number $\beta_{i,j}^R(I)$ (or write $\beta_{i,j}(I)$ if no confusion is caused)  is the {\it $(i,j)$-th graded Betti number}  of $I$ and equals the number of minimal generators of degree $j$ in the $i$-th syzygy module. We have $\beta_{i,j}(I) =   \dim_k\Tor^R_{i+1}(R/I;k)_j$.   The set of graded Betti numbers is represented in terms of a Betti table,  in which the entry at column $i$ and row $j$ is  $\beta_{i,i+j}(I)$.   The  $i$-th total Betti number  of  $I$ is defined by $\beta_i(I) = \sum_{j}\beta_{i,j}(I)$.  The    $\ell$-th column of Betti table  of $I$ is called   {\it  the last column  of Betti table}  of $I$.  The regularity of $I$ is defined by $\reg(I):=\max\{j-i\mid \beta_{i,j}(I)\ne 0\}$.   

Let $G$ be a simple graph on the vertex set $V(G)=\{x_1, \ldots , x_n\}$ and edge set $E(G)$. We associate to the graph $G$ a quadratic squarefree monomial ideal $$I(G) = (x_ix_j  \mid \{x_i,x_j\} \in E(G)) \subseteq R,$$ which is called the {\it edge ideal} of $G$. 
 In \cite{CN}, Corso and Nagel showed that the edge ideal of Ferrers graphs has linear resolution, and furthermore they gave an explicit formula for Betti numbers of this ideals.   After that,   Nagel and  Reiner  (see \cite{NR}) showed that the Betti numbers of the edge ideal of skew Ferrers graphs are independent on the base field $k$.  In this paper, we show that the last Betti number of the edge ideal of skew Ferrers graphs is equal to its  unique extremal Betti number.

\medskip

\noindent  {\bf Theorem \ref{thm0}. } {\it Let  $G$ be  a skew Ferrers graph. Then   $\beta_p(I(G)) = \beta_{p,p+r}(I(G))$, where $p:=\pd(I(G))$ and $r:=\reg(I(G))$.  }   
 
\medskip
 
 Now we assume  a vertex set of $G$ is $V(G)=\{1,\ldots,n\}$. Herzog et al. \cite{HHHKR}; and  Ohtani \cite{O}  independently introduced  
 a {\it binomial edge ideal}, denoted by $J_G$,  associated to  $G$ in polynomial ring  $S:=k[x_1,\ldots,x_n,y_1,\ldots,y_n]$  which is generated by 
  $x_iy_j-x_jy_i$, where  $\{i,j\}\in E(G)$ and $i<j$. It is known that  $\beta_{i,j}(J_G)\leq \beta_{i,j}(\iin(J_G))$ for all $i,j$. This fact implies in particular that  $\pd(J_G)\le \pd(\iin(J_G))$, and $\reg(J_G)\le \reg(\iin(J_G))$. In \cite[Theorem 1.1]{HHHKR},  $J_G$  has a quadratic Gr\"obner basis with respect to the lexicographic order induced  by $x_1>\cdots> x_n>y_1>\cdots> y_n$  if and only if the graph $G$ is  closed   with respect to  the given labeling, in other words, if $G$ satisfies the following condition: whenever $\{i, j\}$ and $\{i, k\}$ are edges of $G$ and either $i <j$, $i< k$ or $i > j$, $i > k$ then $\{j, k\}$ is also an edge of $G$. One calls a graph $G$ {\it closed} if it is closed with respect to some labeling of its vertices.  This concept is  also called {\it PI graph}  (see in \cite{B, H}).  When $G$ is a closed graph, Ene, Herzog and Hibi conjectured in \cite{EHH} that $\beta_{i,j}(J_G)=\beta_{i,j}(\iin(J_G))$ for all $i,j$. This conjecture has been confirmed to be true for Cohen-Macaulay binomial edge ideals in \cite{EHH}, and for closed graphs which consist at most two cliques (see \cite{B}).  Recently, Hern\'an and I (see \cite{DH}) proved the conjecture in some cases when $J_G$ is not Cohen-Macaulay.   Herzog and Rinaldo  also considered the conjecture  for the extremal Betti numbers of binomial edge ideals of  block graphs (see \cite{HR}).  
   
The first result of the  paper is  an affirmation that  the conjecture of Ene, Herzog and Hibi  is true for the  Betti numbers in the last Betti table of the binomial edge ideal of  closed graphs. From that, we obtain that   $\reg(J_G) =\reg(\iin(J_G))$ and $\pd(J_G)=\pd(\iin(J_G))$ for all closed graph $G$. The  statement on the equality of $\reg(J_G)$ and $\reg(\iin(J_G))$ is also proved by Ene and Zarojanu \cite{EZ}.

 \medskip

\noindent  {\bf Theorem \ref{thm1}. } {\it    If $G$ is a closed graph, then   $\reg(J_G) = \reg(\iin(J_G))=:r$, $\pd(J_G) = \pd(\iin(J_G))=:p$, and
 $\beta_p(J_G) = \beta_{p,p+r}(J_G) = \beta_{p}(\iin(J_G))= \beta_{p,p+r}(\iin(J_G))\ne 0.$ 
}
 
 \medskip

The paper is organized as follows. In Section 1, we recall some basic notations and terminology about simplicial complexes. In Section 2, we study non-vanishingness of  Betti numbers of binomial edge ideal of   skew Ferrers graphs in the last column. Section 3 we obtain that the conjecture of Ene, Herzog and Hibi is true for the  Betti numbers in the last column of Betti table.  An explicit formula for the unique extremal Betti number of binomial edge ideal will be given in the last section.

\section{Preliminaries}

A simplicial complex $\Delta$ on the vertex set $V(\Delta):= \{1,\ldots ,n\}$ is a collection of subsets of $V(\Delta)$ such that $F\in \Delta$ whenever $F\subseteq F'$ for some $F'\in \Delta$.   Given any field $k$, we attach to $\Delta$ the {\it Stanley-Reisner} ideal $I_{\Delta}$ of $\Delta$ to be the squarefree monomial ideal
$$I_{\Delta} = (x_{j_1} \cdots x_{j_i} \mid j_1  <\cdots < j_i \ \text{ and } \{j_1,\ldots,j_i\} \notin \Delta) \ \text{ in } R = k[x_1,\ldots,x_n],$$
and the {\it Stanley-Reisner} ring of $\Delta$ to be the quotient ring $k[\Delta] = R/I_{\Delta}$.  This provides a bridge between combinatorics and commutative algebra (see \cite{S}). Then, we say that $\Delta$ is Cohen-Macaulay  over $k$ if $k[\Delta]$ has the same property.  We denote   $\widetilde{H}_j(\Delta;  k)$  is reduced homology group of a simplicial complex $\Delta$ over $k$. The restriction of $\Delta$ to a subset $S$ of $V(\Delta)$ is $\Delta[S] :=\{F\in\Delta \mid F \subseteq S\}$.  A very useful result to compute the graded Betti numbers of the Stanley- Reisner ideal of simplicial complex is the so-called Hochster formula  (c.f. \cite[Theorem 8.1.1]{HH-B}) as follows:
$$ \beta_{i,j}(I_{\Delta}) := \sum_{W\subseteq V(\Delta), |W|=j} \beta_{i,W}(I_{\Delta}),$$  
where  $\beta_{i,W}(I_{\Delta}):= \dim_k \widetilde{H}_{|W|-i-2} (\Delta[W];k)$.  By Hochster formula,  $\beta_{i,j}(I_{\Delta}) \ge \beta_{i,j}(I_{\Delta[S]})$ for all $i,j$ and $S\subseteq V(\Delta)$. Thus we immediately obtain  the following lemma: 

\begin{lem}\label{pd} Let $S\subseteq V(\Delta)$. Then   $\pd(I_{\Delta})\ge \pd(I_{\Delta[S]})$. 
\end{lem}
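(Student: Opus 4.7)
The plan is to deduce the lemma directly from the inequality $\beta_{i,j}(I_{\Delta}) \ge \beta_{i,j}(I_{\Delta[S]})$ already noted just above the lemma, which itself is a one-line consequence of Hochster's formula. The main observation I would first record is that for any $W \subseteq S$, restricting twice agrees with restricting once, i.e., $(\Delta[S])[W] = \Delta[W]$, so $\beta_{i,W}(I_{\Delta[S]}) = \dim_k \widetilde{H}_{|W|-i-2}(\Delta[W];k) = \beta_{i,W}(I_{\Delta})$.

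Next I would invoke Hochster's formula on both sides. For $I_{\Delta[S]}$ the sum ranges over $W \subseteq S$ with $|W|=j$, while for $I_{\Delta}$ it ranges over all $W \subseteq V(\Delta)$ with $|W|=j$. Since every term is a nonnegative dimension, dropping the extra terms from $W \not\subseteq S$ can only decrease the sum. This gives $\beta_{i,j}(I_{\Delta}) \ge \beta_{i,j}(I_{\Delta[S]})$ for every pair $(i,j)$.

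Finally, set $q := \pd(I_{\Delta[S]})$. By definition there is some degree $j$ with $\beta_{q,j}(I_{\Delta[S]}) \ne 0$, and by the inequality just established $\beta_{q,j}(I_{\Delta}) \ge \beta_{q,j}(I_{\Delta[S]}) > 0$. Hence $\pd(I_{\Delta}) \ge q = \pd(I_{\Delta[S]})$, completing the argument. There is no real obstacle here: the entire content is packaged in the Hochster formula, and the lemma is essentially the monotonicity of projective dimension under passage to induced subcomplexes.
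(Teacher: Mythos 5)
Your proof is correct and follows exactly the paper's route: Hochster's formula gives $\beta_{i,j}(I_{\Delta}) \ge \beta_{i,j}(I_{\Delta[S]})$ termwise (using that $(\Delta[S])[W]=\Delta[W]$ for $W\subseteq S$, which you rightly make explicit), and the projective dimension inequality follows immediately. The paper states this in one line just before the lemma; your write-up is merely a more detailed version of the same argument.
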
 
 
Let $\Gamma$ and $\Lambda$ be  two simplicial complexes on the disjoint vertex sets  $V(\Gamma)$ and $V(\Lambda)$, respectively. Define the {\it join}  on the vertex $V(\Gamma)\cup V(\Lambda)$ to be $\Gamma*\Lambda=\{\sigma \cup \tau\mid \sigma \in \Gamma, \tau\in \Lambda\}$.   Using K\"unneth formula (c.f. \cite[Proposition 3.2]{BG}), we can  describe the  reduced homology of a join of two simplicial complexes  in terms of the reduced homologies of the factors as follows:  $$\widetilde{H}_{i}(\Gamma*\Lambda;k)\cong \bigoplus_{p+q=i-1} \widetilde{H}_p(\Gamma;k)  \otimes   \widetilde{H}_q(\Lambda;k), \text{ for each }   i. $$  

From this formula, we obtain the following lemma.
 
\begin{lem} \label{lem4} Let   $\Delta=\Delta_1*\ldots*\Delta_m$, where $\Delta_i$ are disjoint subcomplexes of simplicial complex $\Delta$. Let 
$p_i:=\pd(I_{\Delta_i})$,  and   $r_i:=\reg(I_{\Delta_i})$ for $1\le i\le m$.  Then
$$   \beta_{i-1,j}(I_{\Delta})  =  \sum_{\substack{a_1+\ldots+a_m=i,\\ b_1+\ldots+b_m=j}} \prod_{k=1}^m  \beta_{a_k-1,b_k}(I_{\Delta_k}).$$
In particular, $ \pd(I_{\Delta}) = \sum_{i=1}^mp_k +(m-1):=p$,  $\reg(I_{\Delta}) = \sum_{k=1}^m r_k-(m-1):=r$, and  $\beta_{p, p+r}(I_{\Delta}) =\prod_{i=1}^{m} \beta_{p_i,p_i+r_i}(I_{\Delta_i})$.
\end{lem}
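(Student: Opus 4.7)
The plan is to prove the Betti number identity by induction on $m$, with the base case $m=2$ handled by combining Hochster's formula with the stated K\"unneth formula for joins. For $m=2$, every $W \subseteq V(\Delta)$ decomposes as $W = W_1 \sqcup W_2$ with $W_k := W \cap V(\Delta_k)$, and since the vertex sets are disjoint the restriction factors as $\Delta[W] = \Delta_1[W_1] * \Delta_2[W_2]$. Applying Hochster's formula to $\beta_{i,W}(I_\Delta) = \dim_k \widetilde{H}_{|W|-i-2}(\Delta[W];k)$ and then invoking the K\"unneth decomposition rewrites this group as a direct sum of tensor products $\widetilde{H}_p(\Delta_1[W_1];k) \otimes \widetilde{H}_q(\Delta_2[W_2];k)$ ranging over $p+q = |W|-i-3$. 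Reindexing via $p = |W_1|-a-2$ and $q = |W_2|-b-2$ (so that $a+b=i-1$) identifies each summand with $\beta_{a,W_1}(I_{\Delta_1}) \cdot \beta_{b,W_2}(I_{\Delta_2})$; summing over $|W|=j$ and grouping by the pair $(|W_1|,|W_2|)$ then yields the convolution identity for $m=2$. The inductive step is a routine substitution: write $\Delta = (\Delta_1 * \cdots * \Delta_{m-1}) * \Delta_m$, apply the case $m=2$, and insert the inductive hypothesis, whereupon the two nested convolutions collapse into the single $m$-fold convolution of the statement.

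The consequences for $\pd(I_\Delta)$, $\reg(I_\Delta)$, and the extremal Betti number follow from this convolution by a bookkeeping argument. For the upper bound $\pd(I_\Delta) \le p$, observe that any nonzero summand with all $a_k \ge 1$ forces $a_k - 1 \le p_k$ for each $k$, whence $\sum_k a_k \le \sum_k p_k + m = p + 1$; tuples with some $a_k = 0$ (and thus $b_k = 0$) only reduce $\sum_k a_k$ further. For the matching lower bound, choose, for each $k$, some $j_k$ with $\beta_{p_k, j_k}(I_{\Delta_k}) \ne 0$ and take $(a_k, b_k) = (p_k + 1, j_k)$; non-negativity of Betti numbers excludes cancellation and forces $\beta_{p, \sum_k j_k}(I_\Delta) \ne 0$. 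The identity $\reg(I_\Delta) = r$ is analogous, using the constraint $b_k - (a_k - 1) \le r_k$ in place of $a_k - 1 \le p_k$. Finally, for the top-corner entry $\beta_{p, p+r}(I_\Delta)$, the two constraints enforced simultaneously admit only the tuple $a_k = p_k + 1$, $b_k = p_k + r_k$ for every $k$, so exactly one summand survives and produces $\prod_{k=1}^m \beta_{p_k, p_k + r_k}(I_{\Delta_k})$.

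The only real care point, rather than a genuine obstacle, lies in the boundary terms $a_k = 0$ of the convolution. One must interpret $\beta_{-1, b_k}(I_{\Delta_k})$ through the paper's $\Tor$-based convention (giving $\delta_{b_k, 0}$), which matches precisely the $p = -1$ contribution in the K\"unneth decomposition since $\widetilde{H}_{-1}(\{\emptyset\}; k) = k$. Once this boundary case is tracked consistently with the chosen indexing, the identity and its consequences extend uniformly across the entire Betti table.
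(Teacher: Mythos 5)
Your proof follows the paper's argument essentially verbatim: induction on $m$ via the decomposition $\Delta=(\Delta_1*\cdots*\Delta_{m-1})*\Delta_m$, combining Hochster's formula with the K\"unneth decomposition of $\Delta[W]=\Gamma[W_1]*\Lambda[W_2]$ and the same reindexing to obtain the convolution identity. The only difference is that you spell out the bookkeeping for $\pd$, $\reg$, and the corner entry $\beta_{p,p+r}$ (including the boundary convention $\beta_{-1,0}=1$, $\beta_{-1,b}=0$ for $b>0$), details which the paper compresses into ``from the above formula, we imply the last statements''; your treatment of these is correct.
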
 
\begin{proof} We   prove  the lemma by   induction on $m$.  If  $m=1$, there is nothing to prove.  Now, we  assume that  $m\ge 2$. Let $\Gamma=\Delta_1*\ldots*\Delta_{m-1}$ and $\Lambda=\Delta_{m}$.  By the induction hypothesis, we have 
\begin{eqnarray} \label{Eqna}
 \beta_{s-1,u}(I_{\Gamma})  =  
\sum_{\substack{a_1+\ldots+a_{m-1}=s,\\ b_1+\ldots+b_{m-1}=u}}  \prod_{k=1}^{m-1}  \beta_{a_k-1,b_k}(I_{\Delta_k}).  
\end{eqnarray}   
 By Hochster formula,      $$\beta_{i-1,j}(I_{\Delta}) = \sum_{W\subseteq V(\Delta), |W|=j} \dim_k \widetilde{H}_{j-i-1} (\Delta[W];k).$$ 
For each $W\subseteq V(\Delta)$,  we have $\Delta[W] =\Gamma[W_1] * \Lambda[W_2]$, where $W_1:=W\cap V(\Gamma)$ and $ W_2:=W\cap V(\Lambda)$. Using K\"unneth formula, we  obtain   that 
\begin{eqnarray*}
 \beta_{i-1,j}(I_{\Delta}) &=&  \sum_{\substack{W\subseteq V(\Delta), \\  |W|=j}} \sum_{p+q=j-i-2} \dim_k \widetilde{H}_p(\Gamma[W_1];k)  \dim_k \widetilde{H}_q(\Lambda[W_2];k) \\
 &=&  \sum_{ \substack{W_1\subseteq V(\Gamma), W_2\subseteq V(\Lambda),\\  |W_1|+|W_2|=j}} \sum_{p+q=j-i-2} \dim_k \widetilde{H}_p(\Gamma[W_1];k)  \dim_k \widetilde{H}_q(\Lambda[W_2];k).
\end{eqnarray*}
Set $u:=|W_1|$, $b_m:=|W_2|$, $s:=u-p-1$ and $a_m:=b_m-q-1$. Thus  $s+a_m=i$ and  we get  
$$ \beta_{i-1,j}(I_{\Delta})  = \sum_{\substack{s+a_m=i,\\  u+b_m=j}}    \beta_{s-1,u}(I_{\Gamma}) \beta_{a_m-1,b_m}(I_{\Lambda}).$$
Using (\ref{Eqna}) for the  above formula,  we  imply  that 
 $$ \beta_{i-1,j}(I_{\Delta})  =\sum_{\substack{a_1+\ldots+a_{m}=i,\\ b_1+\ldots+b_{m}=j}}  \prod_{k=1}^m  \beta_{a_k-1,b_k}(I_{\Delta_k}).$$
From the above formula, we imply  the last statements of lemma.  
\end{proof}

\section{Betti numbers of edge ideal of skew Ferrers graphs} 

In this section, we will study non-vanishingness of the Betti numbers of edge ideal of skew Ferrers graphs in the last columm of the Betti table.  In order to obtain these results, we  recall a rectangular decomposition for skew Ferrers diagram   (c.f.  \cite[Section 2.4]{NR}).  First, we  define a  {\it Ferrers diagram} $D_{X,Y}$ with  $\lambda=(\lambda_1=m\ge \ldots\ge \lambda_n)$ on $X=\{x_1,\ldots,x_n\}$ and $Y=\{y_1,\ldots,y_m\}$ is an array of cells doubly indexed by pairs $(x_i, y_j)$ with $1 \le i \le  n$, $m+1-\lambda_i \le j\le m$. The difference between two Ferrers diagrams is called a  {\it skew Ferrers diagram}.  On the other hand, the skew Ferrers diagram $D_{X,Y}$ on $(X,Y)$, where  $X=\{x_1,\ldots, x_n\}$ and $Y=\{y_1,\ldots, y_m\}$,  is defined by two non-increasing sequences of integers, $\lambda=(\lambda_1=m\ge \cdots \ge \lambda_n)$ and $\mu=(\mu_1\ge \ldots\ge \mu_n)$ and $\lambda_i\ge \mu_i$ for all $i$, and having $\lambda_i-\mu_i$ cells in row $i$, namely
 $\{(x_i,y_j)\mid 1\le i\le n, m+1-\lambda_i\le j \le  m-\mu_i\}$ (see \cite{Ma}).   A skew Ferrers diagram such that $\mu_i=0$ for all $i=1,\ldots,n$ is a Ferrers diagram.  

A bipartite graph $G$ on two distinct vertex sets $X=\{x_1,\ldots,x_n\}$ and $Y=\{y_1,\ldots,y_m\}$ corresponding  to a  Ferrers diagram (resp. skew Ferrers diagram)  is called a   {\it Ferrers graph} (resp. {\it skew Ferrers graph}) if $\{x_i,y_j\}$ is an edge of $G$ whenever  $(x_i,y_j)$ is a  cell in  Ferrers diagram (resp. skew Ferrers diagram).  In \cite{CN},  Corso and Nagel  obtained  the irredundant primary decomposition and gave an explicit  formula for the Betti numbers of edge ideal of Ferrers graphs. 

\begin{lem} {\rm \cite[Theorem 2.1]{CN}} \label{CorNag} Let $G$ be a Ferrers graph.  Then the minimal $\mathbb Z$-graded free resolution of $I(G)$ is $2$-linear  with $i$-th Betti number given by
$$\beta_{i}(I(G)) = \binom{\lambda_1}{i+1} + \binom{\lambda_2+1}{i+1}+\ldots+ \binom{\lambda_n+n-1}{i+1}-\binom{n}{i+2},$$
where $1\le i\le \pd(I(G)) = \max_{1\le j\le n}\{\lambda_j + j - 2\}.$ 
\end{lem}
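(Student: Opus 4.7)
The plan is to exhibit \emph{linear quotients} for $I(G)$ under a natural row-by-row ordering of its generators, and extract both the 2-linearity and the explicit Betti formula from the standard theory of linear quotients. Order the edges of $G$ lexicographically: $\{x_a, y_b\} < \{x_{a'}, y_{b'}\}$ when $a < a'$, or $a = a'$ and $b < b'$. For each edge $e = \{x_a, y_b\}$, letting $L_e$ denote the ideal generated by the strictly earlier edges, the first step is to verify
\[
L_e : (x_a y_b) \;=\; (x_1, \ldots, x_{a-1},\, y_{m+1-\lambda_a}, \ldots, y_{b-1}).
\]
The containment $\supseteq$ uses the Ferrers property: whenever $k < a$ the inequality $\lambda_k \geq \lambda_a$ forces $\{x_k, y_b\}$ to be an edge and contributes $x_k$ to the colon, while for each $m+1-\lambda_a \leq l < b$ the edge $\{x_a, y_l\}$ contributes $y_l$. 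The reverse inclusion comes from computing $x_{a'} y_{b'}/\gcd(x_{a'} y_{b'}, x_a y_b)$ for each earlier generator and observing that any quotient of the form $x_{a'} y_{b'}$ (arising from an earlier edge with $a' < a$ and $b' \neq b$) is already a multiple of the variable $x_{a'}$, which is listed.

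Since every colon ideal is generated by variables, $I(G)$ has linear quotients and hence a 2-linear resolution; the standard Eliahou--Kervaire-type formula then gives
\[
\beta_i(I(G)) \;=\; \sum_{e \in E(G)} \binom{r(e)}{i}, \qquad r(x_a y_b) = (a-1) + (b - m - 1 + \lambda_a).
\]
To obtain the closed form, reparameterize $t := b - (m+1-\lambda_a)$ so that $t$ ranges over $0, 1, \ldots, \lambda_a - 1$ as $b$ runs through the neighbors of $x_a$. Two applications of the hockey-stick identity $\sum_{t=0}^{N-1}\binom{c+t}{i} = \binom{c+N}{i+1} - \binom{c}{i+1}$ then collapse
\[
\sum_{a=1}^n \sum_{t=0}^{\lambda_a - 1} \binom{a-1+t}{i} \;=\; \sum_{a=1}^n \binom{\lambda_a + a - 1}{i+1} - \binom{n}{i+2},
\]
matching the claimed formula. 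The projective dimension reads off as $\max_e r(e) = \max_a(\lambda_a + a - 2)$.

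The main obstacle is the combinatorial verification of the colon ideal's precise shape: the Ferrers monotonicity $\lambda_1 \geq \ldots \geq \lambda_n$ must be used carefully both to produce each claimed $x_k$-generator and to rule out any new minimal generator surviving the reduction by the variables already listed. This is exactly where the argument breaks for a \emph{skew} Ferrers graph, in line with the fact that skew Ferrers edge ideals generally fail to have a linear resolution.
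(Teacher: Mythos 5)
Your proof is correct, and a comparison with ``the paper's proof'' is somewhat moot: the paper gives no argument at all, quoting the result as \cite[Theorem 2.1]{CN}. So the relevant benchmark is Corso and Nagel's original proof, which proceeds quite differently -- they construct the minimal free resolution of the Ferrers ideal explicitly (an Eagon--Northcott-type complex, in fact a cellular resolution, which is precisely what Nagel and Reiner later generalize to the skew shapes used in this paper). Your route via linear quotients is genuinely different and more elementary. I checked the key step under the paper's right-justified convention for the diagram: for $e=\{x_a,y_b\}$ the colon $L_e:(x_ay_b)=(x_1,\ldots,x_{a-1},\,y_{m+1-\lambda_a},\ldots,y_{b-1})$ is correct, since for $k<a$ the monotonicity $\lambda_k\ge\lambda_a$ gives $m+1-\lambda_k\le m+1-\lambda_a\le b$, so $\{x_k,y_b\}$ is an earlier edge contributing $x_k$, while every quotient coming from an earlier generator with $a'<a$ is a multiple of the listed $x_{a'}$; and since all generators have degree two, the Herzog--Takayama mapping-cone resolution is minimal, which is the (implicit but standard) justification you need for $\beta_i(I)=\sum_e\binom{r(e)}{i}$, $\pd(I(G))=\max_e r(e)$, and $2$-linearity. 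Your count $r(x_ay_b)=(a-1)+(b-m-1+\lambda_a)$ and the two hockey-stick summations reproduce the stated closed form exactly, the second one giving the $-\binom{n}{i+2}$ term as $\sum_{a=1}^n\binom{a-1}{i+1}$. What your argument buys is brevity and independence from any explicit complex; what it loses relative to Corso--Nagel is the resolution itself, which is the object this paper actually leans on (via Nagel--Reiner) in Lemma \ref{lem1}. Your closing remark is also apt and a good sanity check: in the skew case a row $k<a$ may be truncated on the right ($b>m-\mu_k$), so $x_k$ drops out of the colon and linear quotients fail, consistent with skew Ferrers ideals generally not having linear resolutions.
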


For skew Ferrers graphs, it is not easy to give an explicit formula for Betti numbers of edge ideals of skew Ferrers graphs (see \cite{DH}). However, Nagel and Reiner \cite[Definition 2.9]{NR} gave a  rectangular decomposition to analyze the homotopy type of the associated the  independence complexes of skew Ferrers graphs.   A  rectangular decomposition  of skew Ferrers diagram $D_{X,Y}$ with   $X=\{x_1, \cdots,x_n\}$ and $Y=\{y_1,\cdots,y_m\}$   is a  partition 
 $$D_{X,Y} = D_{(x_{i_1},y_{j_1})} \sqcup \ldots \sqcup D_{(x_{j_r},y_{j_r})},$$  
where all  $D_{(x_{i_k},y_{j_k})}$  are  defined  inductively as follows:  
 
 \medskip
 
 {\it Step 1:} Choose a top cell $(x_{i_1},y_{j_1}) := (x_1,y_1)$.  We denote    $D_{(x_{i_1},y_{j_1})}$ contains all cells $(x_k,y_l)$ in $D_{X,Y}$ such that either $(x_k,y_{j_1})$ or $(x_{i_1},y_{l})$ is cell  in $D_{X,Y}$.  
   
 \medskip
  
{\it  Step 2:} We set   
\begin{eqnarray*}
X'&:=&\{k\mid k\ge i_1 \text{ and } (x_k,y_{j_1}) \text{ is a cell of } D_{(x_{i_1},y_{j_1})} \}, \\
Y'&:=& \{l\mid l\ge j_1 \text{ and } (x_{i_1}, y_l) \text{ is a cell of } D_{(x_{i_1},y_{j_1})} \}.
\end{eqnarray*} 
We set  $a:=|X'|$ and $b:=|Y'|$. Then $X'=\{i_1,\ldots,i_1+a-1\}$ and $Y'=\{j_1,\ldots,j_1+b-1\}$. Thus, $X\backslash X'=\{i_1+a,\ldots, n\}$ and $Y\backslash Y'=\{j_1+b,\ldots,m\}$. We denote  $X''$ (resp. $Y''$) to be a set of all $k\ge i_1+a$ (resp. $l\ge j_1+b$) such that  $k$-row (resp. $l$-column) of the diagram $D_{X\backslash X', Y\backslash Y'}$ doesn't contain any cell.  We call $X''$ (resp. $Y''$) is {\it empty rectangle} if $X''\ne \emptyset$ (resp. $Y''\ne \emptyset$).  If $X\backslash (X'\cup X'') \ne \emptyset$ or $Y\backslash (Y'\cup Y'')\ne \emptyset$, then we  repeat step 1 for  skew Ferrers diagram $D_{X\backslash (X'\cup X''), Y\backslash (Y' \cup Y'')}$.  

\medskip
 
Finally, we get the rectangular decomposition of  $D_{X,Y}$ as above.  We denote   $\rect(D_{X,Y}):=r$ is called {\it rectangularity number} of $D_{X,Y}$. A skew Ferrers diagram  $D_{X,Y}$  is  called {\it spherical}  if in its rectangular decomposition it has no empty rectangles.  
 
 \begin{exam}  The rectangular decomposition of  a skew Ferrers diagram $D_{X,Y}$   with  $\mu=(4,4,2,2,2,1,0)$ and $\lambda=(7,6,6,5,4,3,2)$ is  
 
$$D_{X,Y} = D_{(x_1,y_1)} \sqcup D_{(x_3,y_4)} \sqcup D_{(x_6,y_6)}, \qquad\qquad  \begin{array}{ccccccccc}
       & y_7  &  y_6  &  y_5  &  y_4  &  y_3  &  y_2  &  y_1  \\
x_1 & &   &  &    & \times & \times  &  \times  \\
x_2 &&  &     &     &  \times & \times  &    \\
x_3&  &  &   \times &  \times &  \times &    &     \\
x_4&   &    &    \times &  \times  &  &    &    \\
x_5&   &     &  \times    &    \times &    &    &    \\
x_6&    &   \times   &   \times &     &    &    &    \\
x_7&  \times   &    \times &     &    &     &   &     \\
\end{array} $$
where  $D_{(x_1,y_1)} = \{(x_1,y_1), (x_1,y_2), (x_1,y_3), (x_2,y_2), (x_2,y_3), (x_3,y_3) \}$,  $D_{(x_6,y_6)} = \{(x_6,y_6),$\linebreak 
$(x_7,y_6), (x_7,y_7) \}$, and  $D_{(x_3,y_4)} = \{(x_3,y_4), (x_3,y_5), (x_4,y_4),(x_4,y_5), (x_5,y_5), (x_5,y_6),  (x_6,y_5) \}$.  Then $\rect(D_{X,Y}) = 3$ and $\{x_2\}$ and $\{y_7\}$ are empty rectangles, but    $D_{X\backslash \{x_2\},Y\backslash \{y_7\}}$ is a  spherical. 
 \end{exam}

\begin{lem} {\rm \cite[Corollary 2.15, Theorem 2.23 and Proposition 2.25]{NR}} \label{lem1} Let $G$ be a skew Ferrers graph with vertex set $X\sqcup Y$. Then 
\begin{enumerate}
\item $\beta_{i, X'\sqcup Y'}(I(G)) = \begin{cases}   1, & \text{ if $D_{X',Y'}$ is spherical with $\rect(D_{X',Y'})=|X'\cup Y'| -i - 1$,}\\
0 & \text{ otherwise,} \end{cases}$ 
 where  $X'\subseteq X$, $Y'\subseteq Y$.  \\
\item $\pd(I(G)) = \max\{|X'\cup Y'| -\rect(D_{X',Y'})-1\}$,  where the maximum runs over all subsets $X'\subseteq X$, $Y'\subseteq Y$  for which  $D_{X',Y'}$ is spherical. 
\item $\reg(I(G)) = \rect(D_{X,Y})+1.$ 
\end{enumerate}
\end{lem}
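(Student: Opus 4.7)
The plan is to use Hochster's formula to convert the three statements into questions about the reduced homology of independence complexes of induced subgraphs of $G$, and then to resolve those via the homotopy type of such complexes. Since $I(G)$ is the Stanley-Reisner ideal of the independence complex $\Delta := \mathrm{Ind}(G)$, and $\Delta[W] = \mathrm{Ind}(G[W])$ for every $W \subseteq X \sqcup Y$, Hochster's formula yields
$$\beta_{i,W}(I(G)) = \dim_k \widetilde{H}_{|W|-i-2}(\mathrm{Ind}(G[W]); k).$$
Because $G[W]$ is again a skew Ferrers graph, with diagram $D_W := D_{W \cap X, W \cap Y}$, the entire lemma reduces to determining the homotopy type of $\mathrm{Ind}(H)$ for an arbitrary skew Ferrers graph $H$.

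The central intermediate claim is the following homotopy dichotomy: $\mathrm{Ind}(H)$ is contractible whenever the rectangular decomposition of its diagram $D$ contains at least one empty rectangle, and $\mathrm{Ind}(H) \simeq S^{\rect(D)-1}$ whenever $D$ is spherical. The natural proof is induction on the number of blocks, peeling off the leading block $D_{(x_{i_1}, y_{j_1})}$ (and the empty rectangles it produces) at each step. The key mechanism one would aim to establish is that each leading block contributes a join factor homotopy equivalent to $S^0$, while each empty rectangle contributes a contractible (cone) factor. A clean implementation uses a discrete Morse matching on $\mathrm{Ind}(H)$ pivoting on the corner cell $(x_{i_1}, y_{j_1})$, or iterated folds/deformation retracts on independence complexes of bipartite graphs. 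Iterating over all $r := \rect(D)$ non-empty blocks then gives $\mathrm{Ind}(H) \simeq S^0 * \cdots * S^0 \simeq S^{r-1}$ when $D$ is spherical, and an extra cone factor renders $\mathrm{Ind}(H)$ contractible when empty rectangles are present.

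Granted the dichotomy, the three parts follow quickly. Part (1) is immediate: $\widetilde{H}_{|W|-i-2}(\mathrm{Ind}(G[W]); k)$ is one-dimensional precisely when $D_W$ is spherical with $\rect(D_W) = |W|-i-1$, and is zero otherwise. Part (2) reads off from $\pd(I(G)) = \max\{i : \beta_i(I(G)) \ne 0\}$. For (3), combining (1) with $\reg(I(G)) = \max\{j - i : \beta_{i,j}(I(G)) \ne 0\}$ gives $\reg(I(G)) = \max\{\rect(D_W) + 1 : D_W \text{ spherical}\}$. Taking $W := V(G) \setminus (X'' \cup Y'')$, the complement of the empty-rectangle vertices of $D_{X,Y}$, yields a spherical subdiagram whose rectangular decomposition preserves all of the non-empty blocks of $D_{X,Y}$, hence has $\rect(D_W) = \rect(D_{X,Y})$, realizing the maximum.

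The hard part will be the peeling step in the homotopy dichotomy: showing that deleting a leading block genuinely contributes only an $S^0$ factor (giving the correct suspension at each step) and that empty rectangles produce a contractible join factor. The combinatorics of a single staircase block, and the interaction of the block with vertices absorbed into empty rectangles (which are not isolated in $H$ itself but become isolated after the block's $Y$-vertices are removed), is the genuinely geometric content and the main obstacle to a short proof.
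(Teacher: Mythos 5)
This lemma is not proved in the paper at all: it is quoted verbatim from Nagel--Reiner \cite{NR} (their Corollary 2.15, Theorem 2.23 and Proposition 2.25), so your attempt must be measured against their argument. Your reduction is the right one --- Hochster's formula turns all three parts into the determination of the homotopy type of the independence complex of an arbitrary skew Ferrers graph, and the dichotomy you state (contractible when the rectangular decomposition has an empty rectangle, a sphere $S^{\rect(D)-1}$ when $D$ is spherical) is exactly the content of the cited results, from which (1) and (2) follow by the arithmetic you give. But the proposal has a genuine gap, and you have correctly located it yourself: the dichotomy is asserted, not proved, and the specific mechanism you propose --- each leading block contributing a \emph{join} factor homotopy equivalent to $S^0$ --- is not available as stated, because the blocks of the rectangular decomposition are not vertex-disjoint, so the independence complex is not a join of block complexes. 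The paper's own example makes this concrete: for $\mu=(4,4,2,2,2,1,0)$, $\lambda=(7,6,6,5,4,3,2)$, the vertex $x_6$ carries the cell $(x_6,y_5)$ of the second block and the top cell $(x_6,y_6)$ of the third, and the empty-rectangle vertex $x_2$ is adjacent to $y_2,y_3$ inside the first block. (Compare Lemma \ref{lem4} of the paper, where the join formula is applied only to genuinely disjoint components.) Handling these overlaps is the bulk of Section 2 of \cite{NR}, so your ``peeling'' induction cannot be carried out block by block as written; it must track how the residual diagram interacts with the vertices of the block just removed.

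There is a second, smaller gap in part (3). From the dichotomy one gets $\reg(I(G))=\max\{\rect(D_{X',Y'})+1\}$ over spherical restrictions, and your choice $W=V(G)\setminus(X''\cup Y'')$ gives the lower bound $\reg(I(G))\ge \rect(D_{X,Y})+1$. But to conclude equality you also need the upper bound $\rect(D_{X',Y'})\le \rect(D_{X,Y})$ for \emph{every} restriction, i.e.\ monotonicity of rectangularity under passing to subdiagrams. This is not obvious (restricting can reorganize the decomposition entirely) and is a separate lemma in \cite{NR} (their Lemma 2.24, the same fact this paper invokes in the proof of Lemma \ref{lem2}); your sketch does not address it.
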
 

Let $G$ be a simple graph. For a subset $S$ of $V(G)$ we denote by $G[S]$  the induced subgraph of $G$ on the vertex set $S$; and denote $G\backslash S$ by $G[V\backslash S]$.   A matching in a graph is a set of edges, no two of which meet a common vertex. An {\it induced matching} $M$ in a graph $G$ is a matching where no two edges of $M$ are adjacented by an edge of $G$.  The maximum size of an induced matching in $G$ is denoted $\nu(G)$. By \cite[Theorem 4.5]{DH},   $\rect(D_{X,Y}) = \nu(G)$   
for  any skew Ferrers graph $G$.

 \begin{lem} \label{lem2} Let $G$ be a skew Ferrers graph with vertex set $X\sqcup Y$. If there exists subsets $X_1\subseteq X$, $Y_1\subseteq Y$ such that  $D_{X_1,Y_1}$ is spherical and    $\pd(I(G))=|X_1\cup Y_1| -\rect(D_{X_1,Y_1})-1,$   then $\rect(D_{X_1,Y_1}) = \rect(D_{X,Y})$. 
 \end{lem}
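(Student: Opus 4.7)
Write $p := \pd(I(G))$, $\rho := \rect(D_{X,Y})$, and $k_1 := \rect(D_{X_1,Y_1})$. The goal is to show $k_1 = \rho$.

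The inequality $k_1 \le \rho$ is immediate from Lemma~\ref{lem1}. Since $D_{X_1,Y_1}$ is spherical and $|X_1\cup Y_1| - k_1 - 1 = p$, part~(1) of the lemma gives $\beta_{p,\,|X_1\cup Y_1|}(I(G)) = 1 \neq 0$, so $\reg(I(G)) \ge |X_1\cup Y_1| - p = k_1 + 1$; combined with $\reg(I(G)) = \rho + 1$ from part~(3), this forces $k_1 \le \rho$.

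For the reverse inequality, I argue by contradiction. Suppose $k_1 < \rho$. The plan is to produce a spherical pair $(\widetilde X, \widetilde Y)$ with $X_1 \subseteq \widetilde X \subseteq X$ and $Y_1 \subseteq \widetilde Y \subseteq Y$ satisfying
$$
\rect(D_{\widetilde X,\widetilde Y}) = k_1 + 1 \quad\text{and}\quad |\widetilde X\cup \widetilde Y| \ge |X_1\cup Y_1| + 2.
$$
Granted this, Lemma~\ref{lem1}(2) yields
$$
p \;\ge\; |\widetilde X\cup \widetilde Y| - \rect(D_{\widetilde X,\widetilde Y}) - 1 \;\ge\; (|X_1\cup Y_1|+2) - (k_1+1) - 1 \;=\; p+1,
$$
which is the desired contradiction. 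The size increment ``$\ge 2$'' is automatic, as every piece in a rectangular decomposition contributes at least one row and at least one column.

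The technical heart of the plan is the construction of $(\widetilde X, \widetilde Y)$. My intention is to compare the two rectangular decompositions: $D_{X_1,Y_1}$ produces only $k_1$ pieces whereas $D_{X,Y}$ produces $\rho > k_1$ pieces, so the ``leftover'' rows in $X\setminus X_1$ and columns in $Y\setminus Y_1$ must still carry enough cells of $D_{X,Y}$ to support at least one further top-left piece once the first $k_1$ pieces of $D_{X_1,Y_1}$ have been peeled off. I would adjoin precisely the rows and columns of such a piece to $(X_1,Y_1)$. The main obstacle is verifying the two required properties of the augmented pair: (i)~sphericalness, i.e.\ that no empty rectangle is introduced in the decomposition of $D_{\widetilde X,\widetilde Y}$, and (ii)~that the rectangularity increases by \emph{exactly} one. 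Both points reduce to a delicate induction on the peeling algorithm, using the sphericalness of $D_{X_1,Y_1}$ (so its $k_1$ pieces exhaust $X_1\cup Y_1$). The identity $\rect(D_{X',Y'}) = \nu(G[X' \cup Y'])$ from \cite{DH}, which applies here because restricting to $X' \subseteq X$ and $Y' \subseteq Y$ preserves the skew Ferrers structure, may allow a cleaner reformulation of the augmentation in terms of extending an induced matching by one edge, and this is where I expect the real difficulty of the argument to concentrate.
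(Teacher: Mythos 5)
Your first inequality ($k_1 \le \rho$) is fine, and is in fact a cleaner derivation than the paper's, which simply invokes \cite[Lemma 2.24]{NR} at this point. The problem is the reverse inequality: your ``technical heart'' is never carried out --- you state the required properties (sphericalness of $D_{\widetilde X,\widetilde Y}$ and $\rect(D_{\widetilde X,\widetilde Y})=k_1+1$) and defer them to ``a delicate induction on the peeling algorithm,'' so the proposal is a plan, not a proof. Worse, the plan as stated cannot always be executed. Take $M$ a maximal induced matching of $G[X_1\cup Y_1]$ of size $k_1$ coming from the rectangular decomposition of $D_{X_1,Y_1}$. Your construction implicitly needs a cell $(x_u,y_v)$ of $D_{X,Y}-D_{X_1,Y_1}$ such that $M\cup\{\{x_u,y_v\}\}$ is still an induced matching; only then can you adjoin a full new piece and raise the rectangularity by exactly one while adding two vertices. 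But $k_1<\rho=\nu(G)$ does \emph{not} guarantee such a cell exists: a larger induced matching of $G$ need not extend $M$ nor be compatible with $X_1\cup Y_1$. The paper's proof splits into exactly these two cases, and in the second case --- every cell of $D_{X,Y}-D_{X_1,Y_1}$ fails to extend $M$ --- it makes a \emph{different} move: it adjoins a single vertex $x_u$ (with $x_uy_{j_t}\in E(G)$ for some matched $y_{j_t}$), checks via the shape of the decomposition that $D_{X_1\cup\{x_u\},Y_1}$ is still spherical with rectangularity \emph{unchanged} ($=k_1$), and gets the contradiction $|X_1\cup Y_1|+1-k_1-1=p+1>p$ against Lemma \ref{lem1}(2). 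Your scheme, which insists on rectangularity $k_1+1$ and size increment $\ge 2$, has no answer in this case.

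A secondary flaw: your claim that the increment ``$\ge 2$ is automatic, as every piece contributes at least one row and at least one column'' does not follow. Disjointness of rows and columns across pieces holds \emph{within} one decomposition; it gives $|\widetilde X|,|\widetilde Y|\ge k_1+1$, but says nothing relating $|\widetilde X\cup\widetilde Y|$ to $|X_1\cup Y_1|$, which may greatly exceed $2k_1$. A priori one could have $\widetilde X=X_1\cup\{x_u\}$, $\widetilde Y=Y_1$ with $\nu$ increased by one, and then Lemma \ref{lem1}(2) yields only $p\ge p-1$, no contradiction. So the $\ge 2$ must be built into the construction (as the paper does by adjoining both endpoints $x_u,y_v$ of a cell), not claimed for free. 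In short: right contradiction framework, but the case analysis that makes the augmentation work --- the actual content of the paper's proof --- is missing, and one of the two cases genuinely requires an augmentation of a different type than the one you propose.
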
 
 \begin{proof}  Assume on the contrary that  $\rect(D_{X,Y}) \ne \rect(D_{X_1,Y_1})$.   By \cite[Lemma 2.24]{NR}, we may assume that   $\rect(D_{X,Y})>\rect(D_{X_1,Y_1})$. Then  $X_1\ne X$ or $Y_1\ne Y$. By   the rectangular decomposition of $D_{X_1,Y_1}$, we set $M:=\{\{x_{i_1},y_{j_1}\}, \ldots, \{x_{i_{r_1}}, y_{j_{r_1}}\}\}$ is a  maximal induced matching of  $G[X_1\cup Y_1]$, where  $i_1<\ldots<i_{r_1}$, $j_1<\ldots<j_{r_1}$ and $r_1:= \rect(D_{X_1,Y_1})$.   If  $(x_u,y_v)$ is a cell in $D_{X,Y} - D_{X_1,Y_1}$ such that $M\cup \{x_u,y_v\}$ is a maximal induced matching of $G$. Then $D_{X_1\cup \{x_{u}\},Y_1\cup \{y_v\}}$ is also spherical and $\rect(D_{X_1\cup \{x_u\},Y_1\cup \{y_v\}}) = r_1 +1$. However,  $$|(X_1\cup \{x_u\}) \cup (Y_1\cup \{y_v\})| -\rect(D_{X_1\cup \{x_u\},Y_1\cup \{y_v\}}) -1 = |X_1 \cup Y_1| -r_1,$$
 a contradiction thanks to Lemma \ref{lem1} (2).   Therefore, each cell $(x_u,y_v)$  in $D_{X,Y}-D_{X_1,Y_1}$, $M\cup\{x_u,y_v\}$ is not  an induced matching of $G$. Without loss of generality, we assume that $x_u\notin X_1\cup Y_1$ and  $x_uy_{j_t}\in E(G)$ for $1\le t\le r_1$.    Thus, $\rect(D_{X_1\cup\{x_u\}, Y_1}) =r_1$ and  $ i_{t-1}<u< i_{t+1}$, $u\ne i_t$.    
 By the rectangular decomposition of $D_{X_1\cup \{x_u\},Y_1}$, we reduce two following cases: 
$$\begin{array}{lccc}
                 &y_{j_{t+1}} & y_{j_{t}} &y_{j_{t-1}} \\
x_{i_{t-1}}&  &   &\times \\
x_{i_{t}}   &   & \times &  \\
x_{u}        &  & \times &  \\
x_{i_{t+1}}   &\times &   &  \\
\end{array} 
\qquad \text{ or }\qquad 
\begin{array}{lccc}
                 &y_{j_{t+1}} & y_{j_{t}} &y_{j_{t-1}} \\
x_{i_{t-1}}& &   &\times \\
x_{u}        &  & \times &  \\
x_{i_{t}}   &  & \times &  \\
x_{i_{t+1}}   &\times &   &  \\
\end{array} 
 $$ 
 Thus,    $D_{X_1\cup\{x_u\}, Y_1}$ is also spherical, and so
$$|(X_1\cup \{x_u\}) \cup Y_1| -\rect(D_{X_1\cup \{x_u\}, Y_1}) -1 = |X_1\cup Y_1|-r_1,$$
 a contradiction thanks to Lemma \ref{lem1} (2).    Therefore, we conclude that $\rect(D_{X,Y}) = \rect(D_{X_1,Y_1})$, as required.
 \end{proof}

\begin{thm} \label{thm0}  Let  $G$ be  a skew Ferrers graph. Then   $\beta_p(I(G)) = \beta_{p,p+r}(I(G))$, where $p:=\pd(I(G))$ and $r:=\reg(I(G))$. 
\end{thm}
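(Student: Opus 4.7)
The plan is to prove the theorem by carefully combining Lemma \ref{lem1} (Nagel--Reiner's rectangular formula for the Betti numbers of $I(G)$) with the matching of rectangularities guaranteed by Lemma \ref{lem2}. The underlying idea is: a nonzero contribution to $\beta_p(I(G))$ must come from a subset $W=X'\sqcup Y'$ of the vertex set whose associated sub-diagram $D_{X',Y'}$ is spherical and satisfies $|X'\cup Y'|-\rect(D_{X',Y'})-1=p$; but any such $(X',Y')$ is forced by Lemma \ref{lem2} to have $\rect(D_{X',Y'})=\rect(D_{X,Y})=r-1$, which pins $|W|$ to the single value $p+r$.

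More concretely, I would proceed as follows. Write $\Delta$ for the independence complex of $G$ so that $I(G)=I_\Delta$, and apply the Hochster-type decomposition
\[
\beta_{p,j}(I(G))=\sum_{\substack{W\subseteq V(G)\\ |W|=j}} \beta_{p,W}(I(G)).
\]
For each $W\subseteq V(G)$, write $X':=W\cap X$, $Y':=W\cap Y$, so that $W=X'\sqcup Y'$. By Lemma \ref{lem1}(1) the term $\beta_{p,W}(I(G))$ is either $0$ or $1$, and it equals $1$ only when $D_{X',Y'}$ is spherical and $\rect(D_{X',Y'})=|X'\cup Y'|-p-1$. In that case the equality
\[
|X'\cup Y'|-\rect(D_{X',Y'})-1=p=\pd(I(G))
\]
places $(X',Y')$ in exactly the situation treated by Lemma \ref{lem2}.

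Invoking Lemma \ref{lem2} then forces $\rect(D_{X',Y'})=\rect(D_{X,Y})$, and the hypothesis $r=\reg(I(G))=\rect(D_{X,Y})+1$ from Lemma \ref{lem1}(3) gives $\rect(D_{X',Y'})=r-1$. Combined with $|X'\cup Y'|=\rect(D_{X',Y'})+p+1$, this yields $|W|=p+r$. Hence every summand in the Hochster decomposition with $j\neq p+r$ vanishes, so $\beta_{p,j}(I(G))=0$ for $j\neq p+r$. Summing over $j$ gives
\[
\beta_p(I(G))=\sum_{j}\beta_{p,j}(I(G))=\beta_{p,p+r}(I(G)),
\]
which is the desired conclusion.

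I expect no serious obstacle here, since the real technical work has already been done in Lemma \ref{lem2}: that lemma is precisely the statement that any spherical sub-diagram witnessing the projective dimension must have the same rectangularity as the ambient diagram. The only point requiring care is to observe that a nonzero Betti number in homological position $p$ can arise \emph{only} from a $W$ satisfying the hypotheses of Lemma \ref{lem2} (i.e., $D_{W\cap X, W\cap Y}$ spherical and realizing $p$), which is immediate from Lemma \ref{lem1}(1)--(2); once this observation is in place, the rest is bookkeeping.
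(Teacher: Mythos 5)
Your proposal is correct and follows essentially the same route as the paper: both arguments combine the Hochster-type decomposition with Lemma \ref{lem1}(1),(3) to see that a nonzero $\beta_{p,W}(I(G))$ forces $D_{W\cap X,\,W\cap Y}$ to be spherical with $p=|W|-\rect(D_{W\cap X,\,W\cap Y})-1$, and then invoke Lemma \ref{lem2} to pin $\rect(D_{W\cap X,\,W\cap Y})=\rect(D_{X,Y})=r-1$, hence $|W|=p+r$. The only cosmetic difference is that the paper phrases this as a contradiction for each degree $p+r-i$ with $1\le i\le r-1$, whereas you argue directly that every nonzero summand lives in degree $p+r$; the substance is identical.
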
 
\begin{proof} By   Lemma \ref{lem1} (3),  we have  $r=\rect(D_{X,Y}) +1$ .   For each $1\le i\le r-1$, we assume $\beta_{p,p+r-i}(I(G))\ne 0$. Since  
$$\beta_{p,p+r-i}(I(G)) = \sum_{V'\subseteq V(G) \text{ and } |V'| = p+r-i} \beta_{p,V'}(I(G)),$$
so  there exists $X'\subseteq X$ and  $Y'\subseteq Y$ such that $|X'\cup Y'|=p+r-i$  and   $\beta_{p, X'\sqcup Y'}(I(G)) \ne 0$. By Lemma \ref{lem1} (1),  we  have  $\beta_{p, X'\sqcup Y'}(I(G)) =1$,  $D_{X',Y'}$ is spherical, and $$p=|X'\cup Y'| - \rect(D_{X',Y'})-1.$$ By Lemma \ref{lem2}, $ \rect(D_{X',Y'}) =  \rect(D_{X,Y})$, and thus, 
$p+r = |X'\cup Y'|$, a contradiction. We conclude that $\beta_{p,p+r-i}(I(G))=0$ for all $1\le i\le r-1$.  Therefore, $\beta_p(I(G)) = \beta_{p,p+r}(I(G))$, as required. 
\end{proof} 
 
\begin{cor}\label{cor}  Let $G$  be a skew Ferrers graph with $p:=\pd(I(G))$ and $r:=\reg(I(G))$.  Then $p+2 \ge r$, and moreover if the equality happens, then   $\beta_{p}(I(G)) =\beta_{p,p+r}(I(G))$ is a number of  induced subgraphs of $G$ consisting of $\nu(G)$ disjoint edges.  
\end{cor}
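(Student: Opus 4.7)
The plan is to combine Lemma \ref{lem1} with Theorem \ref{thm0}. By Lemma \ref{lem1}(3) and the identification $\rect(D_{X,Y}) = \nu(G)$ cited in the excerpt, $r = \nu(G) + 1$, so the inequality $p+2 \ge r$ is equivalent to $p \ge \nu(G) - 1$. To obtain it, I would pick a maximum induced matching $M = \{\{x_{i_1}, y_{j_1}\}, \ldots, \{x_{i_\nu}, y_{j_\nu}\}\}$ with $\nu := \nu(G)$ and $i_1 < \cdots < i_\nu$, and set $X_0 = \{x_{i_1}, \ldots, x_{i_\nu}\}$, $Y_0 = \{y_{j_1}, \ldots, y_{j_\nu}\}$. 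Since $M$ is induced, $D_{X_0,Y_0}$ contains exactly the $\nu$ diagonal cells $(x_{i_k},y_{j_k})$; running the rectangular decomposition on this diagonal pattern produces $\nu$ single-cell rectangles with no empty rectangles, so $D_{X_0,Y_0}$ is spherical with $\rect(D_{X_0,Y_0}) = \nu$. Lemma \ref{lem1}(2) then gives $p \ge |X_0 \cup Y_0| - \rect(D_{X_0,Y_0}) - 1 = 2\nu - \nu - 1 = r-2$.

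For the equality case $p+2 = r$, so $p+r = 2\nu$, Theorem \ref{thm0} already yields $\beta_p(I(G)) = \beta_{p,p+r}(I(G))$, and Hochster's formula rewrites this quantity as
\[
\beta_{p,p+r}(I(G)) = \sum_{\substack{X'\subseteq X,\ Y'\subseteq Y \\ |X'\cup Y'| = 2\nu}} \beta_{p, X'\sqcup Y'}(I(G)).
\]
By Lemma \ref{lem1}(1), a summand equals $1$ precisely when $D_{X',Y'}$ is spherical with $\rect(D_{X',Y'}) = |X'\cup Y'| - p - 1 = \nu$, and is $0$ otherwise. So the task reduces to counting pairs $(X',Y')$ with $|X'\cup Y'| = 2\nu$ whose subdiagram is spherical of rectangularity exactly $\nu$, and matching these bijectively with the induced subgraphs of $G$ consisting of $\nu(G)$ disjoint edges.

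The observation driving the bijection is that each rectangle $D_{(x_a,y_b)}$ appearing in a rectangular decomposition occupies at least one row and at least one column, hence contributes at least two vertices to $X'\cup Y'$. Since $\nu$ rectangles must account for exactly $2\nu$ vertices, each rectangle uses precisely one row and one column, so every rectangle is a single cell. Sphericality (the absence of empty rectangles) then forces the cells to form a monotone staircase $(x_{i_k}, y_{j_k})$ with $i_1 < \cdots < i_\nu$, $j_1 < \cdots < j_\nu$ and no skipped rows or columns, so these are the \emph{only} cells of $D_{X',Y'}$; equivalently $G[X'\cup Y']$ is an induced subgraph of $G$ on $2\nu$ vertices consisting of $\nu(G)$ disjoint edges, and conversely every such induced subgraph arises in this way by the construction of the first paragraph.

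The step I expect to be most delicate is the verification that sphericality forces the $\nu$ single cells to sit on a strict staircase with no gap rows or columns — ruling out configurations in which an intermediate row or column becomes empty at some stage of the iterative decomposition described in Section 2 and thus contributes an empty rectangle. Once this is confirmed from a direct inspection of the decomposition procedure, the enumeration of the last-column Betti number reduces cleanly to counting induced subgraphs of $G$ consisting of $\nu(G)$ disjoint edges, as required.
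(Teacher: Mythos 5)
Your proposal is correct, but it takes a genuinely different route from the paper for both halves of the statement. For the inequality $p+2\ge r$, the paper does not touch the rectangular decomposition: it restricts to the vertex set $W$ of a maximum induced matching, observes that $I(G[W])$ is a complete intersection of $\nu(G)$ quadratic monomials (so $\pd(I(G[W]))=\nu(G)-1$), and invokes Lemma \ref{pd}; you instead exhibit the diagonal spherical subdiagram $D_{X_0,Y_0}$ and apply Lemma \ref{lem1}(2), which is equally valid and stays inside the Nagel--Reiner framework. For the equality case, the paper is much shorter: it simply notes $p+r=2(p+1)$ and cites Katzman's Lemma 2.2, which already identifies $\beta_{p,2(p+1)}(I(G))$ with the number of induced subgraphs consisting of $p+1$ disjoint edges for an \emph{arbitrary} graph; you instead reprove this fact for skew Ferrers graphs by enumerating the nonzero Hochster summands via Lemma \ref{lem1}(1). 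Your counting argument is sound, and the paper's citation buys brevity and generality while yours buys self-containedness and an explicit bijection. One remark: the step you flag as most delicate --- verifying that sphericality forces a strict staircase --- is actually not needed. Since the rectangular decomposition is a \emph{partition} of the cells of $D_{X',Y'}$, once your vertex count forces every rectangle to be a single cell the diagram has exactly $\nu$ cells, and the absence of empty rectangles forces each of the $\nu$ rows and $\nu$ columns to contain one of them; hence $G[X'\sqcup Y']$ is a perfect matching on $2\nu$ vertices, which is automatically an induced matching of size $\nu(G)$, with no monotonicity verification required.
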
 
\begin{proof}   By Lemma \ref{lem1} (3),    $r=\nu(G)+1$. Let $W$ be a set of vertices of a maximum  induced matching of $G$.  Then, $I(G[W])$  is a complete intersection ideal. Thus, by Lemma \ref{pd},  we have $\pd(I(G))\ge \pd(I(G[W])) = \nu(G)-1$. Hence $p+2\ge \nu(G)+1=r$. 

On the other hand, if $p+2=r$,  then   $p+1=\nu(G)$. By Theorem \ref{thm0} and   \cite[Lemma 2.2]{Ka},   $\beta_{p}(I(G)) =\beta_{p,p+r}(I(G))= \beta_{p,2(p+1)} (I(G))$   is number of  induced subgraphs of $G$ consisting of $\nu(G)$ disjoint edges.  
\end{proof}

\section{Application to binomial edge ideals of closed graphs} 
In \cite[p. 67]{EHH}, Ene, Herzog and Hibi gave a conjecture that the graded Betti numbers of $J_G$ and $\iin(J_G)$ coincide for all closed graphs.  This section is aimed at  proving this conjecture  for the Betti numbers in the last column of their Betti table.    If $G$ is a closed graph without cut vertices,  the initial ideal of binomial edge ideal   $\iin(J_G)$ of $G$  is generated by squarefree monomials of degree two (see \cite[Theorem 1.1]{HHHKR}). We set a nontrivial connected graph $H$,  which is called {\it initial-closed} graph, corresponding  to $\iin(J_G)$. In \cite{DH}, Hern\'an and I studied the structure of this initial-closed graphs, and we realize that   one is a special skew Ferrers graph. 
 
Let $G$ be a closed graph without closed graph  on vertex set $V(G)=\{1,\ldots,n\}$. Define $N^{>}_{G}(i):=\{j\in V(G)\mid i<j \text{ and } \{i,j\}\in E(G)\}$ and $\deg_G^{>}(i):=|N^{>}_{G}(i)|$.  We associate with $G$ a vector $\mu(G)=(\mu_1,\ldots,\mu_{n})\in \mathbb N^{n}$, where $\mu_j=n-j-\deg_G^{>}(j)$ for all $1\le j\le n$. Then the initial-closed graph $H$ is a bipartite graph with bipartition $(X,Y)$, where $X=\{x_1,\ldots,x_{n-1}\}$ and $Y=\{y_1,\ldots,y_{n-1}\}$, and $\mu(H)=(\mu_1,\ldots,\mu_{n-1})\in \mathbb N^{n-1}$. By \cite[p. 33]{DH},  $H$ is a skew Ferrers graph with $\lambda_i=n-i$ and $\mu_i\le n-2-i$ for $1\le i\le n-3$, $\mu_{n-2}=\mu_{n-1}=0$.   The following theorem confirms that conjecture of Ene, Herzog and Hibi holds for  Betti numbers  in the last column of Betti table. 
  
 \begin{thm} \label{thm1}
 If $G$ is a closed graph, then   $\reg(J_G) = \reg(\iin(J_G))=:r$, $\pd(J_G) = \pd(\iin(J_G))=:p$, and
 $\beta_p(J_G) = \beta_{p,p+r}(J_G) = \beta_{p}(\iin(J_G))= \beta_{p,p+r}(\iin(J_G))\ne 0.$  
 \end{thm}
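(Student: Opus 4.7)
The plan is to deduce the theorem from Theorem \ref{thm0} applied to the skew Ferrers graph $H$ attached to $\iin(J_G)$, combined with the classical fact that extremal Betti numbers are preserved under Gr\"obner degeneration.

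I would first handle the case where $G$ has no cut vertex. As recalled in the excerpt, \cite{DH} shows that the initial-closed graph $H$ is then a skew Ferrers graph on a vertex set $X\sqcup Y$ with $|X|=|Y|=n-1$, and the two remaining variables $x_n,y_1$ of $S$ do not appear in any generator of $\iin(J_G)$. Hence $\beta_{i,j}(\iin(J_G))=\beta_{i,j}(I(H))$ for all $i,j$, and Theorem \ref{thm0} applied to $H$ gives
\[
\beta_{p_0}(\iin(J_G))=\beta_{p_0,\,p_0+r_0}(\iin(J_G))\ne 0,
\]
where $p_0:=\pd(\iin(J_G))$ and $r_0:=\reg(\iin(J_G))$. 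Since $(p_0,p_0+r_0)$ is automatically an extremal position of the Betti table of $\iin(J_G)$ (being pinned by $\pd$ and $\reg$), the Bayer--Charalambous--Popescu preservation theorem for extremal Betti numbers under Gr\"obner degeneration yields
\[
\beta_{p_0,\,p_0+r_0}(J_G)=\beta_{p_0,\,p_0+r_0}(\iin(J_G))\ne 0.
\]
Combined with the semicontinuity $\beta_{i,j}(J_G)\le\beta_{i,j}(\iin(J_G))$, the non-vanishing forces $\pd(J_G)=p_0=:p$ and $\reg(J_G)=r_0=:r$. Finally, $\beta_{p,l}(J_G)\le\beta_{p,l}(\iin(J_G))=0$ for every $l\ne p+r$, so $\beta_p(J_G)=\beta_{p,p+r}(J_G)$.

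For a general closed graph $G$, I would reduce to the above by decomposing at a cut vertex. If $v$ is a cut vertex with $G=G_1\cup G_2$, $V(G_1)\cap V(G_2)=\{v\}$, and both $G_i$ closed, then the Stanley--Reisner complex of $\iin(J_G)$ is the join of those of $\iin(J_{G_1})$ and $\iin(J_{G_2})$, so Lemma \ref{lem4} multiplies the extremal entries in the last column and adds the projective dimensions and regularities in the expected way. A parallel decomposition for $J_G$ (via a tensor-product / Mayer--Vietoris style argument at a cut vertex for binomial edge ideals) lets the equalities propagate from $J_{G_i}$ to $J_G$, and an induction on the number of blocks finishes the proof.

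The hard part will be this cut-vertex step: one has to match the binomial decomposition of $J_G$ with the join decomposition of the Stanley--Reisner complex of $\iin(J_G)$, and verify that Lemma \ref{lem4} admits the expected binomial analogue at the extremal corner of the Betti table. The other ingredients, namely Theorem \ref{thm0} and the Bayer--Charalambous--Popescu preservation theorem, plug in with essentially no extra work.
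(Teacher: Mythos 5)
Your treatment of the biconnected case follows the paper's route (Theorem \ref{thm0} applied to the initial-closed skew Ferrers graph $H$, then transfer to $J_G$), but your citation is off: the Bayer--Charalambous--Popescu theorem preserves extremal Betti numbers under passage to the \emph{generic} initial ideal in reverse lexicographic order, not under an arbitrary Gr\"obner degeneration. For a fixed lex initial ideal such as $\iin(J_G)$, extremal Betti numbers need not be preserved in general (already $\reg(\iin(I))>\reg(I)$ can happen). What rescues the step is exactly the feature you note in passing: since $p_0=\pd(\iin(J_G))$ and $r_0=\reg(\iin(J_G))$, in internal degree $p_0+r_0$ the only possibly nonzero Betti number of $\iin(J_G)$ is $\beta_{p_0,p_0+r_0}$, because $\beta_{i,p_0+r_0}=0$ for $i>p_0$ (projective dimension) and for $i<p_0$ (regularity). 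As $J_G$ and $\iin(J_G)$ have the same Hilbert series, the alternating sums $\sum_i(-1)^i\beta_{i,j}$ agree in every degree $j$; combined with $\beta_{i,j}(J_G)\le\beta_{i,j}(\iin(J_G))$ this forces $\beta_{p_0,p_0+r_0}(J_G)=\beta_{p_0,p_0+r_0}(\iin(J_G))\ne 0$ (equivalently, Peeva's consecutive-cancellation principle: no cancellation partner exists at the corner). This Hilbert-series argument is precisely what the paper uses, so replace the BCP appeal by it and the biconnected case stands.

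The genuine gap is your cut-vertex step. You propose a ``parallel decomposition for $J_G$'' via a tensor-product or Mayer--Vietoris argument at a cut vertex $v$, and you rightly flag it as the hard part --- but $J_{G_1}$ and $J_{G_2}$ share the variables $x_v,y_v$, so $S/J_G$ is not a tensor product of the block pieces, and no binomial analogue of Lemma \ref{lem4} is available; producing one is nontrivial (this is essentially the subject of \cite{HR} for block graphs) and your plan leaves it unproven. The paper shows this step is unnecessary: it decomposes only on the monomial side, where by \cite[Lemma 2.7]{DH} the initial-closed graphs $H_1,\ldots,H_{m+1}$ of the blocks are exactly the connected components of the graph $H$ with $I(H)=\iin(J_G)$. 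These live on disjoint vertex sets, so $\Delta(H)=\Delta(H_1)*\cdots*\Delta(H_{m+1})$ and Lemma \ref{lem4} applies verbatim, yielding $\beta_p(\iin(J_G))=\beta_{p,p+r}(\iin(J_G))\ne 0$ together with the vanishing $\beta_{p,p+r-j}(\iin(J_G))=0$ for $1\le j\le r-1$, for the whole graph at once; only then is the Hilbert-series corner transfer run, a single time, for $G$ itself. So restructure your argument accordingly: compute the last column of the Betti table of $\iin(J_G)$ globally via the join decomposition, and postpone the degeneration step to the very end --- then no decomposition of $J_G$ at cut vertices is needed (the disconnected case is the trivial tensor-product reduction the paper dispatches first).
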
 
\begin{proof}  If  $G$ is disconnected,  we may assume $G_1,\ldots,G_s$ are connected components of $G$.   It is well-known that the  $(i,j)$-th Betti numbers of  $J_G$  and $\iin(J_G)$ coincide if the $(i,j)$-th Betti numbers of   $J_{G_k}$  and $\iin(J_{G_k})$  coincide for all $k=1,\ldots,s$.  Therefore, we now consider $G$  is a connected graph. Then there exists $m$  $(m\ge 0)$  cut vertices of $G$, say $v_1,\ldots,v_{m}$. We  may write $G$ in the form   $$G=G_1\cup\ldots\cup  G_{m+1},$$  
   where  $G_i$ is a subgraph without  cut vertices of $G$,  and for $1\le i< j\le m+1$ either $G_{i}\cap G_{j}=\emptyset$, or     $G_i\cap G_{j}=\{v_k\}$ for some $k$. By the assumption, $G$ is a closed graph, so  $G_i$ is.   Let $r_i:=\reg(\iin(J_{G_i}))$ and $p_i:=\pd(\iin(J_{G_i}))$, and $p:=\pd(\iin(J_G))$ and $r=\reg(\iin(J_G))$.

Let $H$ (resp. $H_i$) be a nontrivial  graph such that $I(H)=\iin(J_G)$ (resp. $I(H_i)=\iin(J_{G_i})$). Thus, $\beta_{p}(\iin(J_G)) =\beta_{p}(I(H))$ and $\beta_{p,p+r}(\iin(J_G)) =\beta_{p,p+r}(I(H))$.  First, we claim that  $\beta_{p}(\iin(J_G)) = \beta_{p,p+r}(\iin(J_G)) \ne 0$. In order to prove this, we  need to prove   $\beta_{p}(I(H)) =  \beta_{p,p+r}(I(H)) \ne 0$. Indeed, for each $1\le i\le m+1$, by \cite[Lemma 2.4]{DH}, $H_i$ is an initial-closed graph correponding to closed graph without cut vertices $G_i$.  By Theorem  \ref{thm0}, we have  $\beta_{p_i}(I(H_i)) = \beta_{p_i,p_i+r_i}(I(H_i))\ne 0$.   Moreover, by \cite[Lemma 2.7]{DH}, $H_1,\ldots, H_{m+1}$ are connected components of $H$, and thus   $H=H_1\sqcup \ldots \sqcup H_{m+1}$.  So
 $p= p_1+\ldots+p_{m+1} + m$,  $r=r_1+\ldots+r_{m+1} - m $ and 
 $$\Delta(H) = \Delta(H_1)*\ldots*\Delta(H_{m+1}).$$
   By Lemma \ref{lem4},  we have  $\beta_{p,p+r} (I(H)) = \prod_{i=1}^{m+1} \beta_{p_i,p_i+r_i}(I(H_i))\ne 0$, and moreover   for $1\le j\le r-1$, 
  \begin{eqnarray*}
    \beta_{p,p+r-j} (I(H)) &=& \sum_{u_1+\ldots+u_{m+1}= p+r-j} \prod_{k=1}^{m+1} \beta_{p_k,u_k}(I(H_k)).
  \end{eqnarray*}
  Since $u_1+\ldots+u_{m+1}=p+r-j$, there exists $1\le \ell \le m +1$ such that $u_{\ell}<p_{\ell}+r_{\ell}$. This implies  that  $\beta_{p_{\ell},u_{\ell}}(I(H_{\ell}))=0$, which means that $\beta_{p,p+r-j}(I(H))=0$.  Hence, $ \beta_{p}(I(H)) = \beta_{p,p+r}(I(H)) \ne 0 $, as claimed.  

On the other hand, since $J_G$ and $\iin(J_G)$ have the same Hilbert polynomial and together with $\beta_{p,p+r}(\iin(J_G))\ne 0$, we obtain    $\beta_{p,p+r}(\iin(J_G))= \beta_{p,p+r}(J_G)$,  $r=\reg(J_G) $ and $p=\pd(J_G)$. Moreover, for each $0\le j\le r-1$, we have  $\beta_{p,p+r-j}(J_G)\le \beta_{p,p+r-j}(\iin(J_G))=0$ which means that $\beta_{p,p+r-j}(J_G) =0$.   Therefore, $\beta_{p}(J_G) =\beta_{p,p+r}(J_G)$, as required. 
  \end{proof} 
 
 \begin{cor}  If $G$ is a closed graph, then     
$$\beta_{p-1,p+r-1}(J_G) = \beta_{p-1,p+r-1}(\iin(J_G)),$$
 where $r:=\reg(J_G)$ and $p:=\pd(J_G)$.   
 \end{cor}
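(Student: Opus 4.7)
\emph{Proof proposal.} The plan is to invoke Peeva's consecutive cancellation principle: since $\iin(J_G)$ arises as a Gr\"obner degeneration of $J_G$, the Betti numbers of $J_G$ can be obtained from those of $\iin(J_G)$ by a finite sequence of consecutive cancellations, each of which simultaneously decreases two entries $\beta_{i,j}$ and $\beta_{i+1,j}$ by one and requires both entries to be strictly positive at the step when it is applied. Consequently $\beta_{p-1,p+r-1}(\iin(J_G)) - \beta_{p-1,p+r-1}(J_G)$ equals the total number of cancellations that touch the position $(p-1, p+r-1)$, and such cancellations come in exactly two flavors, namely those pairing $(p-1, p+r-1)$ with $(p-2, p+r-1)$ and those pairing $(p-1, p+r-1)$ with $(p, p+r-1)$.

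To rule out both kinds, I will show that each potential partner already vanishes in $\iin(J_G)$, and hence stays at zero throughout the cancellation procedure since cancellations only decrease entries. For the position $(p-2, p+r-1)$, the relevant degree difference is $(p+r-1) - (p-2) = r+1$, which exceeds $\reg(\iin(J_G)) = r$ by Theorem \ref{thm1}, forcing $\beta_{p-2, p+r-1}(\iin(J_G)) = 0$. For the position $(p, p+r-1)$, the identity $\beta_{p}(\iin(J_G)) = \beta_{p, p+r}(\iin(J_G))$ provided by Theorem \ref{thm1} means that the entire last column of the Betti table of $\iin(J_G)$ is concentrated at the extremal spot, so $\beta_{p, p+r-1}(\iin(J_G)) = 0$ as well.

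With both partners already equal to zero, neither type of consecutive cancellation can ever be fired at $(p-1, p+r-1)$; the entry at $(p-1, p+r-1)$ is therefore preserved under the cancellation procedure, which yields $\beta_{p-1, p+r-1}(J_G) = \beta_{p-1, p+r-1}(\iin(J_G))$. I do not anticipate a substantive obstacle: the entire argument rests on Theorem \ref{thm1} together with the standard consecutive cancellation principle for initial ideals, so the proof should be short. The only place where some care is warranted is in recording that cancellations are monotone, so a partner that starts at zero cannot become positive later and thereby revive a blocked cancellation.
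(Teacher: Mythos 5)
Your proposal is correct, but it takes a genuinely different route from the paper. The paper's proof never mentions cancellations: it uses the fact that $J_G$ and $\iin(J_G)$ have the same Hilbert function, so the alternating sums $\sum_i(-1)^i\beta_{i,j}$ agree in each degree $j$; taking $j=p+r-1$, all terms with $i\le p-2$ vanish on both sides (since $\reg(J_G)=\reg(\iin(J_G))=r$ forces $\beta_{i,p+r-1}=0$ once $p+r-1-i>r$, a reduction the paper leaves implicit), all terms with $i>p$ vanish by the projective dimension, and the identity collapses to $\beta_{p-1,p+r-1}(J_G)-\beta_{p,p+r-1}(J_G)=\beta_{p-1,p+r-1}(\iin(J_G))-\beta_{p,p+r-1}(\iin(J_G))$; Theorem \ref{thm1} then kills both $\beta_{p,p+r-1}$ terms. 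You instead invoke Peeva's consecutive cancellation theorem and show that both potential cancellation partners of the position $(p-1,p+r-1)$, namely $(p-2,p+r-1)$ and $(p,p+r-1)$, already vanish in the Betti table of $\iin(J_G)$ --- the first by the regularity bound, the second by the concentration $\beta_p(\iin(J_G))=\beta_{p,p+r}(\iin(J_G))$ from Theorem \ref{thm1} --- and your monotonicity remark (entries never increase, so a zero partner stays zero and blocks every cancellation touching the position) is exactly the point that makes this rigorous. Both arguments rest entirely on Theorem \ref{thm1}; what yours buys is a more local and conceptually transparent argument that only needs vanishing data on the initial-ideal side and does not require comparing alternating sums across the whole degree, at the cost of importing Peeva's theorem, whereas the paper's argument is self-contained at the level of Hilbert functions but silently uses the regularity bound for $J_G$ as well to truncate the alternating sum.
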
 
 \begin{proof} 
 Since  $J_G$ and $\iin(J_G)$ have the same Hilbert polynomial, 
$$\beta_{p-1,p+r-1}(J_G) - \beta_{p,p+r-1}(J_G) = \beta_{p-1,p+r-1}(\iin(J_G)) - \beta_{p,p+r-1}(\iin(J_G)).$$
By Theorem \ref{thm1}, we have $\beta_{p,p+r-1}(\iin(J_G))= \beta_{p,p+r-1}(J_G)=0$. Therefore,  $\beta_{p-1,p+r-1}(J_G) = \beta_{p-1,p+r-1}(\iin(J_G)),$   as required. 
 \end{proof}

\section{The unique extremal Betti numbers of binomial edge ideal  of certain closed graphs}  

To compute the unique extremal Betti numbers of binomial edge ideal  of closed graphs, by the argument of Theorem \ref{thm1},  we can reduce the case  closed graphs without  cut vertices.  Now we let $G$ be a closed graph without cut vertices, and   $H$ be  an initial-closed graph with $\mu(H)=(\mu_1,\ldots,\mu_{n-1})\in \mathbb N^{n-1}$ corresponding to $G$. By  \cite[Theorem 4.5]{DH}, the regularity of  $J_G$ equals three if and only if $\mu_1=\ldots=\mu_s=:\mu \ge 1$ and $s\ge 1$, where $s:=\min\{k-1\mid \mu_k=0\}$.    In this section, we will give an explicit formula for the unique extremal Betti number of  binomial edge ideal of closed graph without cut vertices $G$  whenever  the regularity of  $J_G$ equals three.  From there, we have  an explicit  formula for larger regularity case.

Firstly, we need some technical lemmas:  

\begin{lem}\label{tor} Let $I=J+(x_1,\ldots,x_s)\subseteq S:=k[x_1,\ldots,x_n]$ with $1\le s\le n$,  where $J\subseteq S/(x_1,\ldots,x_s)=:S'$. Then $p:=\pd(I) = \pd(J) +s$, and  $ \beta_{p,j}^S(I) = \beta^{S'}_{p-s,j-s}(J).$ 
\end{lem}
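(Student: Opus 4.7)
My plan is to reduce the statement about the ideals $I$ and $J$ to the corresponding statement about the cyclic modules $S/I$ and $S'/J$, and then exploit change of rings for the surjection $S \twoheadrightarrow S' = S/(x_1,\ldots,x_s)$. Since $S/I = S'/J$ as an $S$-module annihilated by $x_1,\ldots,x_s$, writing $M := S/I = S'/J$ and using $0 \to I \to S \to S/I \to 0$, we have $\beta^S_{i,j}(I) = \beta^S_{i+1,j}(S/I)$ for $i \ge 1$, and likewise $\beta^{S'}_{i,j}(J) = \beta^{S'}_{i+1,j}(S'/J)$. It therefore suffices to prove $\pd_S(M) = \pd_{S'}(M) + s =: p' + s$ together with $\beta^S_{p'+s,\,j}(M) = \beta^{S'}_{p',\,j-s}(M)$ for every $j$.

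The main tool is the graded change of rings spectral sequence
$$ E^2_{p,q} = \Tor^{S'}_p\!\left(\Tor^S_q(S',M),\, k\right) \Longrightarrow \Tor^S_{p+q}(M,k). $$
Since $x_1,\ldots,x_s$ is a regular sequence in $S$, the Koszul complex is the minimal graded $S$-free resolution of $S'$; since $x_1,\ldots,x_s$ annihilate $M$, the differentials of $K_\bullet(x_1,\ldots,x_s)\otimes_S M$ vanish, giving $\Tor^S_q(S',M) \cong M^{\binom{s}{q}}$ with internal degree shift $q$. Consequently
$$ \dim_k E^2_{p,q,\,j} = \binom{s}{q}\, \beta^{S'}_{p,\,j-q}(M), $$
so $E^2$ is supported in the rectangle $[0,p'] \times [0,s]$, which already yields the a priori bound $\pd_S(M) \le p' + s$.

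To finish, I will show the spectral sequence is stable at the corner $(p',s)$, so it reads off the top Betti number exactly. The differential $d_r$ has bidegree $(-r,\,r-1)$ for $r \ge 2$: any outgoing differential from $(p',s)$ lands at $(p'-r,\,s+r-1)$ with second coordinate $s+r-1 > s$, while any incoming differential to $(p',s)$ originates at $(p'+r,\,s-r+1)$ with first coordinate $p'+r > p'$; both of these positions lie outside the support rectangle. Therefore $E^\infty_{p',s} = E^2_{p',s}$, and since no other $(p,q)$ with $p+q = p'+s$ survives, $\beta^S_{p'+s,\,j}(M) = \beta^{S'}_{p',\,j-s}(M)$. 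Choosing $j$ for which the right-hand side is nonzero gives $\pd_S(M) = p'+s$, and translating back from modules to ideals by a homological shift of one completes the proof.

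The main obstacle is justifying the graded change of rings spectral sequence and carefully tracking the internal degree shift $q$ coming from $\Tor^S_q(S',M)$. A hands-on alternative avoiding spectral sequences is to construct the minimal $S$-free resolution of $M$ explicitly as the total complex $\widetilde{P}_\bullet \otimes_S K_\bullet$, where $\widetilde{P}_\bullet$ is an $S$-free lift of the minimal $S'$-resolution of $M$ and $K_\bullet$ is the Koszul complex on $x_1,\ldots,x_s$, and then read off $\beta^S_{n,j}(M) = \sum_{p+q=n} \binom{s}{q}\,\beta^{S'}_{p,\,j-q}(M)$ directly, with the top column recovering the displayed identity.
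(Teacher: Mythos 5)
Your proof is correct, but it takes a genuinely different route from the paper's. The paper argues by induction on $s$, peeling off one variable at a time: for $s=1$ it invokes \cite[Remark 2.1]{BCR}, which gives the splitting $\beta^S_{i,j}(S/I)=\beta^{S_1}_{i,j}(S/I)+\beta^{S_1}_{i-1,j-1}(S/I)$ for a module annihilated by a linear nonzerodivisor, observes that the top summand vanishes because the projective dimension drops by exactly one, and then iterates through the intermediate rings $S/(x_s)$, $S/(x_{s-1},x_s)$, and so on. You instead run the change-of-rings spectral sequence $E^2_{p,q}=\Tor^{S'}_p(\Tor^S_q(S',M),k)\Rightarrow\Tor^S_{p+q}(M,k)$ once, compute the $E^2$-page via the Koszul resolution of $S'$ over $S$ (the identification $\Tor^S_q(S',M)\cong M^{\binom{s}{q}}(-q)$ is exactly right, since the $x_i$ kill $M$), and finish with a clean corner argument: the support rectangle $[0,p']\times[0,s]$ meets the diagonal $p+q=p'+s$ only at $(p',s)$, and no differential $d_r$ of bidegree $(-r,r-1)$ enters or leaves that corner. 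Both arguments ultimately rest on the same Koszul resolution; the paper's induction is shorter because the one-variable case is cited as a black box, while yours is self-contained modulo the graded change-of-rings spectral sequence, which is standard and degree-preserving. A notable merit of your version is that the corner argument needs no degeneration of the spectral sequence away from the corner, so it proves exactly what the lemma asserts (the top Betti numbers and $\pd_S(M)=p'+s$) without claiming the full splitting $\beta^S_{n,j}(M)=\sum_{p+q=n}\binom{s}{q}\beta^{S'}_{p,j-q}(M)$; that stronger formula is true (it is the iterate of the cited remark, and is what your alternative construction via the lifted total complex $\widetilde P_\bullet\otimes_S K_\bullet$ would deliver), but verifying that the lifted total complex with its correction terms is a minimal resolution is more work than your main argument requires. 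Two minor points: with the paper's convention $\beta_{i,j}(I)=\dim_k\Tor^S_{i+1}(S/I;k)_j$, your translation $\beta^S_{i,j}(I)=\beta^S_{i+1,j}(S/I)$ holds for all $i\ge 0$, not just $i\ge 1$; and, like the paper, you implicitly assume $J\ne 0$ so that $\pd(J)$ is defined.
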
 
\begin{proof}   We will prove the lemma by induction on $s$.  If $s=1$,   by \cite[Remark 2.1]{BCR}, we have $$\beta^S_{p,j}(S/I) = \beta_{p,j}^{S_1}(I) +  \beta_{p-1,j-1}^{S_1}(I).$$
Since $\pd(J)=\pd(I)  -1=p-1$, so $\beta_{p,j}^{S_1}(I)=\beta_{p,j}^{S_1}(J)= 0$. Thus, we complete the proof in this case. 

If $s\ge 2$, let   $S_{s-1}:=S/(x_s)$ and $J_{s-1} = J+ (x_1,\ldots,x_{s-1})$. Then $I=J_{s-1}+(x_s)$.   By \cite[Remark 2.1]{BCR}, we have 
$$\beta_{p,j}^S(I)  = \beta^{S_{s-1}}_{p,j}(J_{s-1}) +  \beta^{S_{s-1}}_{p-1,j-1}(J_{s-1}).$$ 
However, since  $\pd(I_{s-1})=p-1$, so $\beta^{S_{s-1}}_{p,j}(J_{s-1})=0$. This means that   $\beta_{p,j}^S(I)  =  \beta^{S_{s-1}}_{p-1,j-1}(J_{s-1})$. By the induction hypothesis,        $\beta_{p,j}^S(I)  =\beta_{p-s,j-s}^{S'}(J)$, as required.
\end{proof} 

For any simple graph $G$, the neighborhood of a vertex $x$ of $G$ is the set $N_G(x) := \{y\in V(G) \mid \{x,y\} \in E(G)\}$.  If $S=\{x\}$,  we write  $G\backslash x$ (resp. $G_x$)  instead of  $G\backslash \{x\}$ (resp.  $G\backslash (N_G(x)\cup \{x\})$).

\begin{lem} \label{tech_betti} Let $x$ be a vertex of $G$ with neighbors $y_1, y_2, \ldots , y_s$.  Let   $S_1:=k[V(G\backslash x)]$ and  $S_2:= k[V(G_x)]$.  Let  $p:=\pd(I(G))$.  For each $j$, 
\begin{enumerate}
\item If $p=\pd((I(G),x))$, then    $\beta_{i}^S((I(G),x)) =0$ for all $i>p$, and $$\beta_{p,j}^S((I(G),x)) = \beta_{p-1,j-1}^{S_1}(I(G\backslash x)).$$
\item   If $p= \pd((I(G):x))$, then  $\beta_{p,j}^S((I(G):x)(-1))  =   \beta_{p-s, j-1-s}^{S_2}(I(G_x))$. 
\end{enumerate}
\end{lem}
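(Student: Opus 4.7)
The plan is to reduce both statements to a direct application of Lemma \ref{tor}, by first rewriting $(I(G),x)$ and $(I(G):x)$ so that each becomes a sum of an ideal in a smaller polynomial subring together with an ideal generated by a subset of the variables of $S$.

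For part (1), I would begin with the identity $(I(G),x) = (I(G\backslash x),x)$, which holds because every generator of $I(G)$ of the form $xy_i$ already lies in $(x)$, while the remaining quadratic generators of $I(G)$ are precisely the generators of $I(G\backslash x)$. Since $I(G\backslash x)$ lives in $S_1 = S/(x) = k[V(G\backslash x)]$, Lemma \ref{tor} applied with the single adjoined variable $x$ (i.e.\ $s=1$) yields
\[
\pd_S((I(G),x)) = \pd_{S_1}(I(G\backslash x)) + 1 \qquad \text{and}\qquad \beta_{p,j}^S((I(G),x)) = \beta_{p-1,j-1}^{S_1}(I(G\backslash x)).
\]
The vanishing $\beta_i^S((I(G),x))=0$ for $i>p$ is then immediate from the very definition of projective dimension, since we are assuming $p=\pd_S((I(G),x))$.

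For part (2), I would first compute the colon ideal: since $(xy_i):x=y_i$ and $(uv):x = uv$ whenever $u,v\neq x$, one has $(I(G):x) = (y_1,\ldots,y_s) + I(G\backslash x)$. Every edge of $G\backslash x$ that contains some neighbor $y_i$ is already captured by $(y_1,\ldots,y_s)$, so this simplifies to $(I(G):x) = (y_1,\ldots,y_s) + I(G_x)$. Now $I(G_x)$ is contained in the subring $S/(y_1,\ldots,y_s) = k[V(G_x)\cup\{x\}]$, and since $I(G_x)$ does not involve $x$, flatness of the polynomial extension $S_2\hookrightarrow S_2[x]$ gives $\beta_{i,j}^{S_2[x]}(I(G_x)) = \beta_{i,j}^{S_2}(I(G_x))$. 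Lemma \ref{tor} applied with the $s$ adjoined variables $y_1,\ldots,y_s$ then gives
\[
\beta_{p,j-1}^S((I(G):x)) = \beta_{p-s,\,j-1-s}^{S_2}(I(G_x)),
\]
and reading off the shift via $\beta_{p,j}(M(-1)) = \beta_{p,j-1}(M)$ with $M=(I(G):x)$ delivers the stated formula.

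I do not expect a substantial obstacle here; the argument is essentially a bookkeeping exercise once one sees that both ideals admit a presentation of the form ``ideal-in-a-subring plus a variable ideal'' to which Lemma \ref{tor} applies verbatim. The only point that needs a little care is justifying the passage between Betti numbers computed over $S$, over $S/(y_1,\ldots,y_s)$, and over $S_2$ in part (2); this is handled uniformly by the observation that adjoining a polynomial variable to the ground ring leaves all graded Betti numbers invariant.
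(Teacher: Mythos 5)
Your proposal is correct and follows essentially the same route as the paper: the paper cites \cite[Lemma 3.1]{DHS} for the two identities $(I(G),x)=I(G\backslash x)+(x)$ and $(I(G):x)=I(G_x)+(y_1,\ldots,y_s)$ (which you instead verify directly, legitimately, via the standard description of colons of monomial ideals), and then applies Lemma \ref{tor} with the same shift bookkeeping. Your explicit flatness justification for passing from $S/(y_1,\ldots,y_s)=S_2[x]$ to $S_2$ is the same invariance the paper invokes in one line, so there is nothing to add.
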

\begin{proof} In order to prove the lemma, we first use the following result by \cite[Lemma  3.1]{DHS}: 
\begin{eqnarray*}
I(G):x &=& I(G_x) + (y_{1},\ldots,y_{s}),\\
(I(G),x) &=& I(G\backslash x) + (x).
\end{eqnarray*} 

(1) By the definition of the projective dimension, we get $\beta_{i}^S((I(G),x))=0$ for all $i>p$.   By Lemma \ref{tor}, we have $\beta_{p,j}^S((I(G),x))    = \beta^{S_1}_{p-1,j-1}(I(G\backslash x))$. This   completes  the proof of the assertion. 

\medskip

(2) We have $\beta_{p,j}^S((I(G):x)(-1))    =    \beta_{p,j-1}^S((I(G):x))$. By Lemma \ref{tor},   $\beta_{p,j-1}^S((I(G):x))=   \beta_{p-s,j-1-s}^{S'}(I(G_x))$, where $S':=S/(y_1,\ldots,y_s)$.  Since $I(G_x)$ is an  ideal in polynomial ring $S_2$, we imply that   $\beta_{p-s,j-1-s}^{S'}(I(G_x))=\beta_{p-s,j-1-s}^{S_2}(I(G_x))$. Hence, the assertion is proved. 
 \end{proof}
 
 \begin{lem} \label{tech_lem}
 Let $H$ be an  initial-closed graph with $\mu=(\mu_1,\ldots,\mu_{n-1})\in \mathbb N^{n-1}$, where  $\mu_1=\ldots=\mu_s=1$,   $\mu_{s+1}=\ldots=\mu_{n-1}=0$ and  $s\ge 1$. Then 
 $$\beta_{2n-s-4, 2n-s-1}(I(H)) = s.$$ 
 \end{lem}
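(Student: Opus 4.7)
The plan is to single out the vertex $y_{n-1}$, whose neighborhood in $H$ is precisely $\{x_{s+1},\ldots,x_{n-1}\}$ (the rows unaffected by the shift $\mu_i=1$), and to exploit the short exact sequence
\[0\to S/(I(H):y_{n-1})(-1)\xrightarrow{\cdot y_{n-1}} S/I(H)\to S/(I(H),y_{n-1})\to 0.\]
Writing $p:=2n-s-4$ and $r:=3$ for the target indices $p$ and $p+r=2n-s-1$, the long exact Tor sequence will reduce the computation of $\beta_{p,p+r}(I(H))$ to a Betti number of an edge ideal of a Ferrers graph, which I then evaluate directly via the Corso-Nagel formula (Lemma \ref{CorNag}).

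First I describe the two auxiliary graphs. Removing $y_{n-1}$ together with its neighbors gives $H_{y_{n-1}}$ on $\{x_1,\ldots,x_s\}\cup\{y_1,\ldots,y_{n-2}\}$, with $\{x_i,y_j\}$ an edge iff $i\le j\le n-2$; this is a Ferrers graph with $\lambda_i=n-1-i$ for $1\le i\le s$. Removing only $y_{n-1}$ leaves the isolated vertex $x_{n-1}$ together with a Ferrers graph on $\{x_1,\ldots,x_{n-2}\}\cup\{y_1,\ldots,y_{n-2}\}$ having $\lambda_i=n-1-i$ for $1\le i\le n-2$. Lemma \ref{CorNag} gives both edge ideals a $2$-linear resolution with projective dimension $n-3$. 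Applying Lemma \ref{tor} to the splittings $(I(H):y_{n-1})=I(H_{y_{n-1}})+(x_{s+1},\ldots,x_{n-1})$ and $(I(H),y_{n-1})=I(H\setminus y_{n-1})+(y_{n-1})$ then yields $\pd((I(H):y_{n-1}))=(n-3)+(n-1-s)=p$ and $\pd((I(H),y_{n-1}))=n-2$.

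The hypothesis that $H$ is initial-closed forces $s\le n-3$, giving the strict inequality $p>n-2$; consequently $\beta_{p+1,\ast}(S/(I(H),y_{n-1}))=\beta_{p+2,\ast}(S/(I(H),y_{n-1}))=0$, and the relevant segment of the long exact Tor sequence collapses to the isomorphism $\beta_{p,p+r}(I(H))=\beta_{p,p+r}((I(H):y_{n-1})(-1))$. Since $p=\pd((I(H):y_{n-1}))$, Lemma \ref{tech_betti}(2) gives
\[\beta_{p,p+r}((I(H):y_{n-1})(-1))=\beta_{n-3,n-1}(I(H_{y_{n-1}})),\]
and $2$-linearity identifies this with the top total Betti number of $I(H_{y_{n-1}})$. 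The Corso-Nagel formula evaluates this number as
\[\sum_{j=1}^{s}\binom{\lambda_j+j-1}{n-2}-\binom{s}{n-1}=s\binom{n-2}{n-2}-0=s,\]
where the second binomial vanishes because $s\le n-3<n-1$.

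The main obstacle is checking that the two outer Tor terms really do vanish, which hinges on the comparison $p>n-2$ derived from the constraint $s\le n-3$ built into the notion of an initial-closed graph, and on confirming that the projective dimension of $(I(H):y_{n-1})$ coincides with the index $p$ at which Lemma \ref{tech_betti}(2) is invoked.
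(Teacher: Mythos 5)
Your proposal is correct and follows essentially the same route as the paper's own proof: the same short exact sequence $0\to S/(I(H):y_{n-1})(-1)\to S/I(H)\to S/(I(H),y_{n-1})\to 0$, the same identification of $H_{y_{n-1}}$ and $H\setminus\{x_{n-1},y_{n-1}\}$ as Ferrers graphs with the projective dimensions computed via Lemma \ref{tor}, the same vanishing argument from $p>n-2$ (the paper's ``$n-s\ge 3$''), and the same reduction through Lemma \ref{tech_betti}(2) to $\beta_{n-3,n-1}(I(H_{y_{n-1}}))=s$ by the Corso--Nagel formula. Your write-up merely makes explicit two points the paper leaves implicit --- the binomial evaluation $s\binom{n-2}{n-2}-\binom{s}{n-1}=s$ and the constraint $s\le n-3$ coming from $\mu_{n-2}=\mu_{n-1}=0$ --- and it silently corrects the paper's typographical degree $p+2$ (which should be $p+3$) in the displayed Tor sequence.
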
 
 \begin{proof} Let $p:=2n-4-s$. By \cite[Lemma 3.8]{DH}, we have $\pd(I(H)) = p$.  Note that $n-s\ge 3$.   Since $H_{y_{n-1}}$ is a  Ferrers graph with   $\lambda(H_{y_{n-1}}) = (n-2, \ldots,n-1-s)\in \mathbb N^{s}$.   By Lemma \ref{CorNag},    $\pd(I(H_{y_{n-1}})) = n-3$, $\reg(I(H_{y_{n-1}}))=2$ and  $\beta_{n-3,n-1}^{S'}(I(H_{y_{n-1}}))=\beta_{n-3}^{S'}(I(H_{y_{n-1}})) = s$, where $S':=k[V(H_{y_{n-1}})]$.   Since $N_H(y_{n-1}) = \{x_{s+1},\ldots,x_{n-1}\}$, 
   $$\pd((I(H):y_{n-1}) = \pd(I(H_{y_{n-1}})) + (n-s-1) =  (n-3) + (n-s-1) = p. $$
On the other hand,  $H\backslash \{x_{n-1}, y_{n-1}\}$ is also a Ferrers  graph with $\lambda(H\backslash \{x_{n-1}, y_{n-1}\})=(n-2,\ldots,2,1)\in \mathbb N^{n-2}$. By Lemma \ref{CorNag}, $\pd(I(H\backslash \{x_{n-1}, y_{n-1}\})) =   n-3$.   Thus,  $$\pd((I(H),y_{n-1}))=1+\pd(I(H\backslash  x_{n-1}))= 1+ \pd(I(H\backslash \{x_{n-1},y_{n-1}\}))  =  n-2.$$  Since   $p >   n-2$,   so $\Tor_{i}^S(S/(I(H),y_{n-1});k)=0$ for $i\ge p$. 
  
 From a short exact following sequence: 
$$0\to S/(I(H):y_{n-1})(-1)\to  S/I(H)\to  S/(I(H),y_{n-1}) \to 0, $$  
 we  get  a long exact sequence of $\Tor$-modules.  This implies that   the   following  sequence:   $$0 \to  \Tor^{S}_{p+1}(S/(I(H):y_{n-1})(-1);k)_{p+2} \to \Tor^S_{p+1}(S/I(H);k)_{p+2} \to  0$$
 is exact.    Thus, $\beta_{p,p+3}^S(I(H)) = \beta_{p,p+3}^{S}((I(H):y_{n-1})(-1)) =\beta_{p,p+2}^S((I(H):y_{n-1}))$.   Together with   Lemma \ref{tech_betti}, we get  $\beta_{p,p+3}^S(I(H)) = \beta_{p-(n-s-1),p+2-(n-s-1)}^{S'}(I(H_{y_{n-1}})) =\beta_{n-3,n-1}^{S'}(I(H_{y_{n-1}}))=s$.  
 \end{proof}

 \begin{lem} \label{lem_betti_initial} Let  $H$ be an  initial-closed graph with $\mu(H)=(\mu_1,\ldots,\mu_{s}, 0,\ldots,0)\in \mathbb N^{n-1}$, where $\mu_1=\ldots=\mu_s=:\mu>0$ and $s\ge 1$.  Then 
  $$\beta_{2n-\mu-s-3, 2n-\mu-s} (I(H))= s\mu.$$
 \end{lem}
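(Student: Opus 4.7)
The plan is to proceed by induction on $\mu \geq 1$. The base case $\mu = 1$ is precisely Lemma~\ref{tech_lem}. For the inductive step with $\mu \geq 2$, I will mimic the strategy of Lemma~\ref{tech_lem}: split at the vertex $y_{n-1}$ via the short exact sequence
$$0 \to S/(I(H):y_{n-1})(-1) \to S/I(H) \to S/(I(H), y_{n-1}) \to 0,$$
and read off the long exact sequence of $\Tor$ at bidegree $(p+1, p+3)$, where $p := 2n - \mu - s - 3 = \pd(I(H))$.

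For the left term, $N_H(y_{n-1}) = \{x_{s+1},\ldots,x_{n-1}\}$, so $H_{y_{n-1}}$ is a Ferrers graph with $\lambda = (n-\mu-1, n-\mu-2, \ldots, n-\mu-s)$; by Lemma~\ref{CorNag} its edge ideal has a $2$-linear resolution with $\pd(I(H_{y_{n-1}})) = n - \mu - 2$ and top Betti number $\beta_{n-\mu-2,n-\mu}(I(H_{y_{n-1}})) = s$. Lemma~\ref{tech_betti}(2) then gives $\pd(I(H):y_{n-1}) = p$ and $\beta_{p,p+3}((I(H):y_{n-1})(-1)) = s$. For the right term, $x_{n-1}$ is an isolated vertex of $H \setminus y_{n-1}$, so its edge ideal has the same Betti table as that of $H'' := H \setminus \{y_{n-1}, x_{n-1}\}$. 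A direct check using the bound $\mu \leq n - 2 - s$ (implicit in $H$ being an initial-closed graph) shows that $H''$ is itself an initial-closed graph of a closed graph with parameters $(n-1, s, \mu - 1)$, and $\mu - 1 \geq 1$ since $\mu \geq 2$. The induction hypothesis applied to $H''$ yields $\beta_{p-1,p+2}(I(H'')) = s(\mu-1)$, and Lemma~\ref{tech_betti}(1) (applicable because $\pd((I(H),y_{n-1})) = 1 + \pd(I(H'')) = p$) lifts this to $\beta_{p,p+3}((I(H),y_{n-1})) = s(\mu-1)$.

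With both outer Betti numbers computed, the relevant portion of the long exact sequence reads
$$0 \to s \to \beta_{p,p+3}(I(H)) \to s(\mu-1) \to \beta_{p-1,p+2}(I(H):y_{n-1}),$$
the leftmost zero because $\pd((I(H),y_{n-1})) = p$ kills $\Tor_{p+2}(S/(I(H),y_{n-1}))$, and the rightmost term vanishes because $I(H):y_{n-1}$ shares the regularity $2$ of $I(H_{y_{n-1}})$, so its Betti table has no entries in column $3$. Adding the outer terms then gives $\beta_{p,p+3}(I(H)) = s + s(\mu - 1) = s\mu$, closing the induction.

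The main technical subtlety is establishing $\beta_{p-1,p+2}(I(H):y_{n-1}) = 0$ in a homological degree strictly below the projective dimension, since Lemma~\ref{tor} as stated produces the Betti numbers of $I(H):y_{n-1}$ only in top homological degree. To resolve this, I would tensor the minimal free resolution of $I(H_{y_{n-1}})$ with the Koszul complex on the regular sequence $(x_{s+1}, \ldots, x_{n-1})$ to obtain a minimal free resolution of $I(H):y_{n-1}$ that visibly preserves the $2$-linear strand of $I(H_{y_{n-1}})$; this yields $\reg(I(H):y_{n-1}) = 2$ and hence $\beta_{*, *+3}(I(H):y_{n-1}) = 0$. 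A secondary bookkeeping task is verifying the defining inequalities of an initial-closed graph for $H''$ uniformly across $\mu \geq 2$, which follows from the inequality $\mu \leq n - 2 - s$ for $H$.
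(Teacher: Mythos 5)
Your proposal is correct and takes essentially the same route as the paper's proof: induction on $\mu$ with Lemma \ref{tech_lem} as base case, the same short exact sequence at $y_{n-1}$, Lemma \ref{tech_betti} identifying the outer terms as $s$ and $s(\mu-1)$, and the same two vanishing arguments (projective dimension of $(I(H),y_{n-1})$ on one end, regularity of $(I(H):y_{n-1})$ on the other) to split the long exact sequence of $\Tor$. Your explicit Koszul-complex justification that $\reg((I(H):y_{n-1}))=2$ fills in a step the paper only asserts (where the paper's displayed value $3$ accounts for the twist by $(-1)$), but this does not change the structure of the argument.
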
 
\begin{proof}    By \cite[Lemma 3.8]{DH},  $\pd(I(H)) = 2n- \mu-s-3=:p$.  We will prove by  induction on $\mu$.  If $\mu=1$, the lemma is proved by Lemma \ref{tech_lem}. Now we assume that  $\mu\ge 2$.  
Since $H_{y_{n-1}}$ is a  Ferrers graph with   $\lambda(H_{y_{n-1}}) = (n-\mu-1, \ldots,n-\mu-s)\in \mathbb N^{s}$. By Lemma \ref{CorNag},  $\pd(I(H_{y_{n-1}})) = n-\mu-2$, $\reg(I(H_{y_{n-1}})) =2$ and  $\beta_{n-\mu-2,n-\mu}(I(H_{y_{n-1}})) =\beta_{n-\mu}(I(H_{y_{n-1}})) =s$.  Then   
\begin{eqnarray*}
\pd((I(H):y_{n-1}) &=& \pd(I(H_{y_{n-1}})) + (n-s-1) = p,  \\
\reg((I(H):y_{n-1}) &=& \reg(I(H_{y_{n-1}})) =3.
\end{eqnarray*}
Thus,   $\beta_{p-1,p+3}^{S}((I(H):y_{n-1})(-1)) = \beta_{p-1,p+3}^{S}((I(H):y_{n-1})) =0$.  Hence, we conclude that  $\Tor_{p}^S(S/(I(H):y_{n-1})(-1);k)_{p+3}={\bf 0}$.

On the other hand,  $H\backslash \{x_{n-1}, y_{n-1}\}$ is an initial-closed graph with $\mu(H\backslash \{x_{n-1}, y_{n-1}\})=(\mu-1,\ldots,\mu-1,0,\ldots,0)\in \mathbb N^{n-2}$. By \cite[Lemma 3.8]{DH}, $\pd(I(H\backslash \{x_{n-1}, y_{n-1}\})) = 2n-3- \mu-1-s  = p-1$.  Then  $\pd((I(H),y_{n-1}))=1+\pd(I(H\backslash \{x_{n-1},y_{n-1}\}))   =   p. $  Thus,  $\Tor_{p+2}^S(S/(I(H),y_{n-1});k)  = {\bf 0}$.  

From a short  exact sequence of $S$-modules:
\begin{eqnarray*}  
0\to S/(I(H):y_{n-1})(-1) \to S/I(H) \to S/(I(H),y_{n-1})\to 0,
\end{eqnarray*} 
we obtain  a long exact sequence of $\Tor$-modules and thus, the following sequence
\begin{eqnarray*}
0   \to   \Tor^{S}_{p+1}(S/(I(H):y_{n-1})(-1);k)_{p+3} &\to& \Tor^S_{p+1}(S/I(H);k)_{p+3}\\
 &\to& \Tor^{S}_{p+1}(S/(I(H), y_{n-1});k)_{p+3} 
  \to  0, 
\end{eqnarray*} 
is exact.   Therefore, we get  $$\beta_{p,p+3}^S(I(H)) = \beta_{p,p+3}^S((I(H):y_{n-1})(-1)) +\beta_{p,p+3}^S((I(H), y_{n-1}).$$
Let $S_1:=k[V(H\backslash \{x_{n-1},y_{n-1}\})]$ and $S_2:=k[V(H_{y_{n-1}})]$.  By Lemma \ref{tech_betti}, we obtain 
 \begin{eqnarray*}
\beta_{p,p+3}^S(I(H)) &=&    \beta_{n-\mu-2, n-\mu}^{S_2}(H_{y_{n-1}})+ \beta_{p-1,p+2}^{S_1}(I(H\backslash \{x_{n-1},y_{n-1}\})) \\
&=&  s + \beta_{p-1,p+2}^{S_1}(I(H\backslash \{x_{n-1},y_{n-1}\})). 
\end{eqnarray*} 
By the induction  hypothesis,  we have $\beta_{p-1,p+2}^{S_1}(I(H\backslash \{x_{n-1},y_{n-1}\})) = (\mu-1)s$. Hence, 
we conclude that $\beta_{p,p+3}(I(H)) =  s + (\mu-1)s =\mu s$, as required.  
\end{proof} 

\begin{thm} \label{thm2}  Let  $G$  be a closed graph without cut vertices with $\mu(G)=(\mu_1,\ldots,\mu_n)\in \mathbb N^n$. Then 
  $\reg(J_G)=3$ if and only if $\mu_1=\ldots=\mu_s=:\mu\ge 1$ and $ s\ge 1$, where $s:=\min\{k-1\mid \mu_k=0\}$. In particular,  
$$\beta_{p}(J_G) =\beta_{p,p+3}(J_G) = \beta_{p}(\iin(J_G))  =\beta_{p,p+3}(\iin(J_G)) = s\mu,$$
where $p=\pd(J_G) = \pd(\iin(J_G)) = 2n-\mu-s-3$. 
\end{thm}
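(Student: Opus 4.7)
The plan is to reduce Theorem \ref{thm2} entirely to previously established results, since the real work has already been carried out in Lemma \ref{lem_betti_initial} and in the author's joint paper \cite{DH}. The first assertion, that $\reg(J_G)=3$ if and only if $\mu_1=\cdots=\mu_s=:\mu\ge 1$ with $s\ge 1$, is exactly \cite[Theorem 4.5]{DH}, so I would quote it directly and focus the remainder of the proof on the ``in particular'' clause.

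Assuming this condition on $\mu(G)$, I would proceed as follows. First, apply Theorem \ref{thm1} with $r=\reg(J_G)=3$ to obtain the string of equalities
$$\beta_{p}(J_G)=\beta_{p,p+3}(J_G)=\beta_{p}(\iin(J_G))=\beta_{p,p+3}(\iin(J_G)),$$
where $p=\pd(J_G)=\pd(\iin(J_G))$. This reduces the problem to computing a single Betti number of a monomial ideal, namely $\beta_{p,p+3}(\iin(J_G))$. Second, let $H$ be the initial-closed graph corresponding to $G$, so that $\iin(J_G)=I(H)$; by the description recalled at the start of Section 3, $H$ is a skew Ferrers graph with $\mu(H)=(\mu_1,\ldots,\mu_{n-1})$, which under our hypothesis has the form $(\mu,\ldots,\mu,0,\ldots,0)$ with exactly $s$ copies of $\mu$. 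Third, invoke \cite[Lemma 3.8]{DH} to obtain $\pd(I(H))=2n-\mu-s-3$, and then invoke Lemma \ref{lem_betti_initial} to obtain $\beta_{p,p+3}(I(H))=s\mu$. Combining these with the equalities from Theorem \ref{thm1} produces the formula claimed.

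The proof is therefore essentially a chain of citations. The genuine difficulty does not lie in Theorem \ref{thm2} itself, but in the preparatory lemmas, most notably Lemma \ref{lem_betti_initial}, whose proof carries out a delicate induction on $\mu$ using the short exact sequence associated with the colon ideal $(I(H):y_{n-1})$ and the ideal $(I(H),y_{n-1})$, combined with the Ferrers-graph resolution of Corso--Nagel (Lemma \ref{CorNag}). The only bookkeeping point to verify carefully is the match between the indexing of $\mu(G)\in\mathbb N^n$ and $\mu(H)\in\mathbb N^{n-1}$; this is standard given the construction recalled in Section 3, since $\mu_n$ is automatically zero and the remaining entries determine $\mu(H)$ unambiguously.
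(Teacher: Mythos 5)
Your proposal is correct and follows essentially the same route as the paper's own proof: quote the regularity characterization from \cite{DH}, reduce the computation to $\beta_{p,p+3}(I(H))$ for the initial-closed graph $H$ with $\mu(H)=(\mu,\ldots,\mu,0,\ldots,0)$, evaluate it as $s\mu$ via Lemma \ref{lem_betti_initial}, and transfer everything between $J_G$ and $\iin(J_G)$ using Theorem \ref{thm1}. Incidentally, your citation of Theorem \ref{thm1} at the final step is the intended one, where the paper's printed proof contains a self-referential typo (it says the theorem follows from Theorem \ref{thm2} itself), and your citation of \cite[Theorem 4.5]{DH} for the regularity statement matches the paper's own section preamble even though its proof cites \cite[Corollary 4.7]{DH}.
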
 
 \begin{proof} The first statement is followed by  \cite[Corollary 4.7]{DH}.  Now we prove the second statement. Let $H$ be an initial-closed graph   corresponding to closed graph without cut vertices $G$.  Then $\beta_{p,p+3}(\iin(J_G)) = \beta_{p,p+3}(I(H))$.  By the assumption,  we have $\mu(H)=(\mu_1,\ldots,\mu_{n-1})\in \mathbb N^{n-1}$, where   $\mu_1=\ldots=\mu_s=:\mu\ge 1$ and $  s\ge 1$. By Lemma \ref{lem_betti_initial}, $\beta_{p,p+3}(I(H)) = s\mu$. From this, the theorem is proved  by  Theorem \ref{thm2}.   
  \end{proof}
  
 \begin{thm} 
Let $G$ be a connected closed graph with $m$ cut vertices, say  $v_1,\ldots,v_{m}$. Then  $G$ is written in the form $$G=G_1\cup\ldots\cup  G_{m+1},$$ where  $G_i\cap G_{i+1}=\{v_i\}$ and $G_i\cap G_j = \emptyset$ for $i=1,\ldots,m$ and $i\ne j\ne i+1$.  Let   $\mu(G_i)= (\mu_{i1},\ldots,\mu_{in_i})$   and $s_i:=\min\{k-1\mid \mu_{ik} =0\}$, where $n_i=|V(G_i)|$.  If      
 $\mu_{i1}=\ldots=\mu_{is_i}=:\mu_i\ge 1$ and $s_i\ge 1$ for all $i$,   then    
$$\beta_p(J_G) = \beta_{p,p+r}(J_G) = \beta_p(\iin(J_G)) = \beta_{p,p+r}(\iin(J_G)) = \prod_{i=1}^{m+1} s_i\mu_i,$$
where $p:=\pd(J_G) =2n - 3 - \sum_{i=1}^{m+1} (\mu_i+s_i)$ and $r:=\reg(J_G)=2m+3$. 
 \end{thm}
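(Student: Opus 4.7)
The plan is to combine the structural reduction used in the proof of Theorem \ref{thm1} with the explicit count provided by Theorem \ref{thm2}. Since each $G_i$ is a closed graph without cut vertices, \cite[Lemma 2.7]{DH} gives that the initial-closed graph $H$ associated to $G$ decomposes as a disjoint union $H = H_1 \sqcup \cdots \sqcup H_{m+1}$ with $I(H_i) = \iin(J_{G_i})$; equivalently, the independence complex factors as a join $\Delta(H) = \Delta(H_1) * \cdots * \Delta(H_{m+1})$.

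First I would apply Theorem \ref{thm2} to each piece, which under the hypothesis $\mu_{i1} = \cdots = \mu_{is_i} = \mu_i \ge 1$ with $s_i \ge 1$ yields $r_i := \reg(\iin(J_{G_i})) = 3$, $p_i := \pd(\iin(J_{G_i})) = 2n_i - \mu_i - s_i - 3$, and $\beta_{p_i, p_i+3}(I(H_i)) = s_i\mu_i$. Lemma \ref{lem4} applied to the join then gives
\begin{align*}
p &= \sum_{i=1}^{m+1} p_i + m, \qquad r = \sum_{i=1}^{m+1} r_i - m = 3(m+1) - m = 2m + 3, \\
\beta_{p,p+r}(I(H)) &= \prod_{i=1}^{m+1} \beta_{p_i, p_i+r_i}(I(H_i)) = \prod_{i=1}^{m+1} s_i \mu_i.
\end{align*}
Since each cut vertex lies in exactly two blocks, $\sum_{i=1}^{m+1} n_i = n + m$, and the first formula reduces to $p = 2n - 3 - \sum_{i=1}^{m+1}(\mu_i + s_i)$, matching the claim.

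It remains to transfer the conclusion from $\iin(J_G)$ to $J_G$ and to check that $\beta_{p,p+r}(J_G)$ equals the total Betti number $\beta_p(J_G)$. Both steps follow verbatim the argument of Theorem \ref{thm1}: for each $1 \le j \le r-1$, any partition $u_1 + \cdots + u_{m+1} = p + r - j$ forces $u_\ell < p_\ell + r_\ell$ for some $\ell$, so $\beta_{p_\ell, u_\ell}(I(H_\ell)) = 0$, and therefore $\beta_{p, p+r-j}(I(H)) = 0$ by the product formula in Lemma \ref{lem4}. The equality of Hilbert polynomials of $J_G$ and $\iin(J_G)$, together with the comparison $\beta_{i,j}(J_G) \le \beta_{i,j}(\iin(J_G))$, then forces $\beta_{p,p+r}(J_G) = \beta_{p,p+r}(\iin(J_G))$ and the vanishing $\beta_{p, p+r-j}(J_G) = 0$ for $1 \le j \le r-1$, which is exactly $\beta_p(J_G) = \beta_{p,p+r}(J_G)$. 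The only genuine bookkeeping is the arithmetic $p = 2n - 3 - \sum(\mu_i + s_i)$ via the double-count $\sum n_i = n + m$; I do not expect any substantive obstacle, since the whole argument is essentially a repackaging of Theorems \ref{thm1} and \ref{thm2}.
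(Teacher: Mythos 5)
Your proposal is correct and follows essentially the same route as the paper: decompose $H = H_1 \sqcup \cdots \sqcup H_{m+1}$ via \cite[Lemma 2.7]{DH}, apply Theorem \ref{thm2} to each block, combine with Lemma \ref{lem4} and the count $\sum n_i = n+m$, and transfer to $J_G$. The only cosmetic difference is that the paper simply cites Theorem \ref{thm1} for the transfer step and the vanishing $\beta_{p,p+r-j}=0$, whereas you re-run that theorem's internal argument verbatim.
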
 
 \begin{proof} By Theorem \ref{thm1}, we only need to prove $\beta_{p,p+r}(\iin(J_G)) = \prod_{i=1}^{m+1} s_i\mu_i$, where 
 $p=\pd(\iin(J_G)) =2n - 3 - \sum_{i=1}^{m+1} (\mu_i+s_i)$ and $r=\reg(\iin(J_G))=2m+3$.  First,  we call  $H$ is a nontrivial graph such that $I(H) =\iin(J_G)$ and  $H_i$ is  an initial-closed graph correspoding to the closed graph without cut vertices $G_i$ for  $1\le i\le m+1$.  Then $n=\sum_{i=1}^{m+1} n_i - m$.  By the assumption, we have  $p_i:=\pd(I(H_i)) = \pd(\iin(J_{G_i})) = 2n_i-\mu_i-s_i-3$ and $r_i:=\reg(I(H_i)) =\reg(\iin(J_{G_i}))= 3$.  Thus,   
  $p=\pd(I(H))=p_1+\ldots+p_{m+1}+ m = 2n - 3 - \sum_{i=1}^{m+1} (\mu_i+s_i)$,  $r=\reg(I(H))=r_1+\ldots+r_{m+1}- m =2m+3$. By  \cite[Lemma 2.7]{DH}, $H= H_1\sqcup \ldots\sqcup H_{m+1}$.  By Lemma \ref{lem4}, 
 $$\beta_{p,p+r}(I(H)) = \prod_{i=1}^{m+1} \beta_{p_i, p_i+r_i}(I(H_i)).$$
  Together with  Theorem \ref{thm2},        $\beta_{p,p+r}(\iin(J_G)) =\beta_{p,p+r}(I(H))= \prod_{i=1}^{m+1} s_i\mu_i$,   as required. 
 \end{proof}

\subsection*{Acknowledgment}  The author is  grateful to Hern\'an de Alba  for a discussion. I am partially supported by the NAFOSTED (Vietnam) under grant number  101.04-2015.02.

  \end{document}